\theoremstyle{plain}
\newtheorem{theorem}{Theorem}[section]
\newtheorem{proposition}[theorem]{Proposition}
\newtheorem{lemma}[theorem]{Lemma}
\newtheorem{MainThm}{Theorem}
\theoremstyle{definition}
\newtheorem{definition}[theorem]{Definition}
\newtheorem{remark}[theorem]{Remark}
\numberwithin{equation}{section}
\newcommand{\bQ}{\mathbb{Q}}
\newcommand{\bR}{\mathbb{R}}
\newcommand{\bF}{\mathbb{F}}
\newcommand{\bH}{\mathbb{H}}
\newcommand{\bP}{\mathbb{P}}
\newcommand{\cA}{\mathcal{A}}
\newcommand{\cl}{\mathcal{L}}
\newcommand{\ft}{\mathfrak{t}}
\newcommand{\sign}{\operatorname{sign}}
\newcommand{\Diff}{\mathrm{Diff}}
\newcommand{\Homeo}{\mathrm{Homeo}}
\newcommand{\cha}{\mathrm{char}}
\newcommand{\id}{\mathrm{id}}
\newcommand{\blockdiff}{\widetilde{\Diff}}
\newcommand{\hofib}{\operatorname{hofib}}
\newcommand{\opstar}{\mathrm{St}^\circ}
\renewcommand{\star}{\mathrm{St}}
\newcommand{\cM}{\mathcal{M}}
\newcommand{\inter}{\operatorname{int}}
\newcommand{\Link}{\operatorname{Lk}}
\newcommand{\trf}{\operatorname{trf}}
\newcommand{\Gr}{\operatorname{Gr}}
\newcommand{\Sq}{\operatorname{Sq}}
\newcommand{\thom}{\operatorname{th}}
\newcommand\lra{\longrightarrow}
\title[Generalised Miller--Morita--Mumford classes]{Generalised Miller--Morita--Mumford classes for block bundles and topological bundles}
\author{Johannes Ebert}
\thanks{}
\email{johannes.ebert@uni-muenster.de}
\address{Mathematisches Institut der
Westf{\"a}lische Wilhelms-Universit{\"a}t M{\"u}nster\\
Einsteinstr. 62\\
DE-48149 M{\"u}nster\\
Germany}
\author{Oscar Randal-Williams}
\thanks{}
\email{o.randal-williams@dpmms.cam.ac.uk}
\address{Centre for Mathematical Sciences\\
Wilberforce Road\\
Cambridge CB3 0WB\\
UK}
\keywords{Cohomology of diffeomorphism groups, Miller-Morita-Mumford classes, block diffeomorphisms}
\subjclass[2010]{
55R20, 
55R35, 
55R40, 
55R60, 
57N55, 
57R20, 
57R65, 
57S05
}
\begin{document}

\begin{abstract}
The most basic characteristic classes of smooth fibre bundles are the generalised Miller--Morita--Mumford classes, obtained by fibre integrating characteristic classes of the vertical tangent bundle. In this note we show that they may be defined for more general families of manifolds than smooth fibre bundles: smooth block bundles and topological fibre bundles.
\end{abstract}

\maketitle

\section{Introduction}

Let $M$ be a smooth, closed, oriented manifold of dimension $d$, and $\pi : E \to B$ be a fibre bundle with fibre $M$ and structure group $\Diff^+(M)$, the topological group of orientation-preserving diffeomorphisms. The \emph{vertical tangent bundle} is a $d$-dimensional oriented vector bundle $T_v E \to E$, which may be constructed from the principal $\Diff^+(M)$-bundle $P \to B$ associated to $\pi$ by
$$T_v E := P \times_{\Diff^+(M)} TM,$$
using the action of $\Diff^+(M)$ on $TM$ which takes the differential of a diffeomorphism.

Recall that the ring of characteristic classes of oriented $d$-dimensional vector bundles with coefficients in the field $\bF$ is $H^*(BSO(d);\bF)$. If $\cha(\bF) \neq 2$ we have
\begin{align*}
H^* (BSO(2m); \bF) &\cong \bF[e,p_1,\ldots,p_m]/(e^2-p_m)\\
H^* (BSO(2m+1); \bF) &\cong \bF[p_1,\ldots,p_m],
\end{align*}
where $p_i$ is the $i$th Pontrjagin class and $e$ is the Euler class, while for $\cha(\bF) = 2$ we have
$$H^* (BSO(n); \bF) \cong \bF [w_2,\ldots, w_n],$$
where $w_i$ is the $i$th Stiefel--Whitney class. Thus for any monomial $c$ of degree $k$ in these classes, we may evaluate the characteristic class $c(T_v E) \in H^k(E;\bF)$, then push it forwards along the map $\pi$ to obtain
$$\kappa_c(\pi) := \pi_!(c(T_v E)) \in H^{k-d}(B;\bF),$$
the generalised Miller--Morita--Mumford class, hereafter MMM-class, associated to $c$. This construction tautologically yields characteristic classes for smooth oriented fibre bundles with $d$-dimensional fibres. In particular, there are universal classes
$$\kappa_c \in H^{k-d}(B\Diff^+(M);\bF).$$

In this note we investigate to what extent the characteristic classes $\kappa_c$ may be defined on more general families of manifolds: block bundles and topological bundles. (We will recall the notion of a block bundle in Definition \ref{defn:block-bundle}.) 

By the relation $e^2 = p_m$ in the cohomology of $BSO(2m)$ with coefficients in a field $\bF$ of characteristic not 2, we may write any monomial in $H^*(BSO(2m);\bF)$ in the form $e^\epsilon \cdot p_1^{i_1} \cdots p_m^{i_m}$ with $\epsilon = 0$ or $1$. Let us then define $\bF[p_1, p_2, \ldots] \langle 1, e \rangle$ to be the vector space over $\bF$ with basis the monomials in $e$ and the $p_i$ where $e$ occurs with exponent at most 1. 

\begin{MainThm}\label{thm:A}
Let $\bF$ be any field, and fix a dimension $d$. If $\cha(\bF) \neq 2$ then let $c$ be a monomial in $\bF[p_1, p_2, \ldots]$ if $d$ is odd, or a monomial in $\bF[p_1, p_2, \ldots] \langle 1, e \rangle$ if $d$ is even. If $\cha(\bF)=2$ then let $c$ be a monomial in $\bF[w_1, w_2, \ldots]$. There is defined for each oriented smooth block bundle $(p : E \to \vert K\vert, \mathcal{A})$ with $d$-dimensional fibres over a simplicial complex a class
$$\tilde{\kappa}_{c}(p, \mathcal{A}) \in H^{*}(\vert K\vert;\bF)$$
such that
\begin{enumerate}[(i)]
\item If $f : L \to K$ is a simplicial map, and $(f^*p : f^*E \to \vert L\vert, f^*\mathcal{A})$ is the pull-back block bundle, then $f^* \circ \tilde{\kappa}_{c}(p, \mathcal{A}) = \tilde{\kappa}_{c}(f^*p, f^*\mathcal{A})$.
\item If the block bundle $(p : E \to \vert K\vert, \mathcal{A})$ arises from a smooth fibre bundle $\pi$, then $\tilde{\kappa}_{c}(p, \mathcal{A}) = \kappa_{c}(\pi)$.
\end{enumerate}
\end{MainThm}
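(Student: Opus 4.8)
The plan is to reduce the construction of $\tilde\kappa_c$ for block bundles to a statement about classifying spaces, and then to construct a suitable map of spaces. Recall that oriented smooth block bundles with $d$-dimensional fibre are classified by the block diffeomorphism group: there is a semi-simplicial group $\blockdiff^+(M)$ whose geometric realisation $B\blockdiff^+(M)$ carries a universal block bundle, and a block bundle $(p : E \to |K|, \mathcal{A})$ determines a homotopy class of map $|K| \to B\blockdiff^+(M)$, functorially in simplicial maps $f : L \to K$, with the property that bundles coming from honest fibre bundles factor through the canonical map $B\Diff^+(M) \to B\blockdiff^+(M)$. So it suffices to produce a class $\tilde\kappa_c \in H^{*-d}(B\blockdiff^+(M);\bF)$ pulling back to $\kappa_c$ along $B\Diff^+(M) \to B\blockdiff^+(M)$; then one sets $\tilde\kappa_c(p,\mathcal{A})$ to be the pull-back of $\tilde\kappa_c$ along the classifying map, and properties (i) and (ii) are formal consequences of the naturality of that classifying map.

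To build $\tilde\kappa_c \in H^{*-d}(B\blockdiff^+(M);\bF)$ I would first address the vertical tangent bundle. The universal block bundle $E \to B\blockdiff^+(M)$ is not a manifold bundle in the honest sense, but it is a \emph{block bundle}, and over the subspace corresponding to nondegenerate simplices each block is literally $M \times \Delta^n$; gluing the vertical tangent bundles $T M \times \Delta^n$ of the blocks — this is possible because block diffeomorphisms restrict to genuine diffeomorphisms on the faces, so the differentials patch together — gives a $d$-dimensional oriented vector bundle $T_v E \to E$, or at least a stable one, which is all we need since the classes $p_i, w_i$ (and, when $d$ is even, $e$, via $e^2 = p_m$, using that the fibre dimension is fixed and even) are all detected stably or as explicit polynomials in the unstable generators in degree $\le d$. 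Thus $c(T_v E) \in H^k(E;\bF)$ is defined.

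The remaining, and genuinely substantive, point is fibre integration: one needs a Gysin map $p_! : H^k(E;\bF) \to H^{k-d}(B\blockdiff^+(M);\bF)$ for the universal block bundle, and more generally for any block bundle, compatible with pull-back along simplicial maps and agreeing with the usual fibre integration when the block bundle is a genuine fibre bundle. I expect this to be the main obstacle, and the plan is to construct it simplex-by-simplex: over each nondegenerate $n$-simplex the block $E|_\sigma$ is homeomorphic to $M \times \Delta^n$ rel boundary behaviour, so it carries a fundamental class rel boundary in $H_{d+n}(E|_\sigma, E|_{\partial\sigma})$, and capping with these — equivalently, using that $M$ is a closed oriented $d$-manifold to get a transfer $\Sigma^\infty_+ E|_\sigma \to \Sigma^\infty_+ \Delta^n$ of degree $d$, assembled over the simplicial structure — yields the desired umkehr map. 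Concretely I would phrase this via the Thom class of the (stable, vertical normal or tangent) bundle together with a Pontryagin--Thom collapse performed blockwise, exactly as in the fibre bundle case but carried out in the category of block bundles; the compatibility with simplicial pull-back is automatic from the construction, and agreement with $\kappa_c(\pi)$ in case (ii) follows because, restricted to genuine fibre bundles, the blockwise Pontryagin--Thom construction is the usual one. Finally one defines $\tilde\kappa_c := p_!(c(T_v E))$ and checks naturality, completing the proof.
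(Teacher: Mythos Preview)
Your overall architecture (reduce to universal classes on $B\blockdiff^+(M)$, then pull back) is the same as the paper's, but two steps in the middle do not go through as written.

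First, the vertical tangent bundle cannot be obtained by gluing the bundles $TM \times \Delta^n$ over the blocks. A block diffeomorphism $f : \Delta^p \times M \to \Delta^p \times M$ is \emph{not} fibre-preserving, so its differential $Df$ does not carry the vertical sub-bundle $TM$ to itself; there is no well-defined cocycle to glue along. This is precisely the difficulty that distinguishes block bundles from fibre bundles. The paper deals with this by constructing only a \emph{stable} vertical tangent bundle $T_v^s E$, via a carefully chosen embedding $E \hookrightarrow |K| \times \bR^N$ (Proposition \ref{existence:normal-bundle}): one takes the tangent space of each block inside $\sigma \times \bR^N$, stabilises by the normal bundle of $\sigma$ in a fixed euclidean space, and checks that a ``good'' embedding makes these agree across faces. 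The result is only well-defined stably.

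Second, your treatment of the Euler class is incorrect: the relation $e^2 = p_m$ does not let you recover $e$ from stable data, since square roots need not exist (and need not be unique) in cohomology. Since $T_v^s E$ is only a stable bundle, $e(T_v^s E)$ is simply undefined. The paper circumvents this by not using the Euler class at all: for monomials of the form $e \cdot q(p_1,\ldots,p_n)$ it \emph{defines}
\[
\tilde\kappa_{e\cdot q}(p,\mathcal{A}) := \trf_p^*\bigl(q(p_1(T_v^s E),\ldots,p_n(T_v^s E))\bigr),
\]
using the Becker--Gottlieb transfer $\trf_p^*$ (available because $p$ is a weak quasifibration with finite-CW fibre, Proposition \ref{block-bundles:quasifibration}) and the identity $\trf_\pi^*(x) = \pi_!(e(T_v E)\cdot x)$ valid for genuine smooth bundles. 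This is why the statement of Theorem \ref{thm:A} only allows $e$ to appear to the first power. For the Gysin map itself the paper uses the Leray--Serre spectral sequence of the quasifibration rather than a blockwise Pontryagin--Thom collapse; your proposed collapse would also require the stable vertical normal bundle, so it runs into the same issue as your tangent bundle construction.
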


We have a similar statement for topological bundles, but only when the coefficients are a field of characteristic zero or two.

\begin{MainThm}\label{thm:B}
Let $\bF$ be a field of even characteristic, and fix a dimension $d$. If $\cha(\bF) = 0$ then let $c$ be a monomial in $\bF[p_1, p_2, \ldots]$ if $d$ is odd, or a monomial in $\bF[e, p_1, p_2, \ldots]$ if $d$ is even. If $\cha(\bF)=2$ then let $c$ be a monomial in $\bF[w_1, w_2, \ldots, w_d]$. For each fibre bundle $\pi : E \to B$ with fibre a $d$-dimensional oriented topological manifold $M$ and structure group the orientation-preserving homeomorphisms of $M$, there is defined a class
\begin{align*}
{\kappa}^{TOP}_{c}(\pi) \in H^*(B ; \bF)
\end{align*}
such that
\begin{enumerate}[(i)]
\item If $f : C \to B$ is a continuous map, and $f^*\pi : f^*E \to C$ is the pull-back bundle, then $f^* \circ {\kappa}^{TOP}_{c}(\pi) = {\kappa}^{TOP}_{c}(f^*\pi)$.
\item If $M$ admits a smooth structure and the bundle $\pi : E \to K$ admits a reduction of its structure group to the orientation-preserving diffeomorphisms, then ${\kappa}^{TOP}_{c}(\pi) = \kappa_{c}(\pi)$.
\end{enumerate}
\end{MainThm}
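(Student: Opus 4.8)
The plan is to imitate the smooth construction, replacing the vertical tangent \emph{vector} bundle by the vertical tangent \emph{microbundle}, and replacing the ring $H^*(BSO(d);\bF)$ of characteristic classes of oriented vector bundles by a suitable supply of universal classes on $B\mathrm{STOP}(d)$, the classifying space of the topological group $\mathrm{STOP}(d)$ of orientation-preserving homeomorphisms of $\bR^d$. First recall that any topological manifold bundle $\pi : E \to B$ with $d$-dimensional fibres has a vertical tangent microbundle $\tau_v\pi$, represented by $E \xrightarrow{\Delta} E \times_B E \xrightarrow{\mathrm{pr}_1} E$, and that, by the theorem of Kister and Mazur, this microbundle may be taken to be an honest fibre bundle with fibre $\bR^d$ and structure group $\mathrm{TOP}(d)$. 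A homeomorphism of $M$ acts on $\tau_v$ compatibly with its action on $M$, so when the structure group of $\pi$ is the orientation-preserving homeomorphisms the microbundle $\tau_v\pi$ is oriented, hence classified by a map $\tau_v\pi : E \to B\mathrm{STOP}(d)$.

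Next I would produce the relevant characteristic classes in $H^*(B\mathrm{STOP}(d);\bF)$. When $\cha(\bF) = 2$, the mod $2$ Thom class of the universal oriented $\bR^d$-bundle over $B\mathrm{STOP}(d)$ exists, so the usual recipe $w_i = \phi^{-1}(\Sq^i \phi(1))$ produces classes $w_1,\ldots,w_d \in H^*(B\mathrm{STOP}(d);\bF_2)$ which pull back to the ordinary Stiefel--Whitney classes along $BSO(d) \to B\mathrm{STOP}(d)$; extending scalars gives such classes over any $\bF$ of characteristic $2$. When $\cha(\bF) = 0$ it suffices to work over $\bQ$ and extend scalars: the Euler class $e \in H^d(B\mathrm{STOP}(d);\bZ)$ is defined from the integral Thom class of the universal oriented $\bR^d$-bundle, while the rational Pontrjagin classes $p_i \in H^{4i}(B\mathrm{STOP}(d);\bQ)$ are obtained by pulling back along $B\mathrm{STOP}(d) \to B\mathrm{TOP}$ the polynomial generators, using the theorem of Novikov and Kirby--Siebenmann that $BO \to B\mathrm{TOP}$ induces an isomorphism $H^*(B\mathrm{TOP};\bQ) \cong \bQ[p_1,p_2,\ldots]$, and by construction these restrict to the ordinary Pontrjagin classes along $BSO(d) \to B\mathrm{STOP}(d)$. (It is exactly the failure of $BO \to B\mathrm{TOP}$ to be an $\bF_p$-cohomology isomorphism for odd primes $p$ that obstructs defining $\kappa^{TOP}_c$ with $\bF_p$-coefficients, which is why $\cha(\bF)$ must be $0$ or $2$.) Evaluating the admissible monomial $c$ on $\tau_v\pi$ now gives a well-defined class $c(\tau_v\pi) \in H^k(E;\bF)$.

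Finally I would fibre-integrate. Since $\pi$ is proper with fibres closed oriented topological $d$-manifolds -- hence Poincar\'e duality spaces, the orientation trivialising the Leray sheaf $R^d\pi_*\bF \cong \bF$ and $R^i\pi_*\bF = 0$ for $i > d$ -- there is a Gysin homomorphism $\pi_! : H^*(E;\bF) \to H^{*-d}(B;\bF)$, which is natural for pullback, satisfies the projection formula, and reduces to the ordinary fibre integration when $\pi$ is smooth; one then sets $$\kappa^{TOP}_c(\pi) := \pi_!\bigl(c(\tau_v\pi)\bigr) \in H^{k-d}(B;\bF).$$ Property (i) is immediate from the naturality of the classifying map of $\tau_v\pi$ and of $\pi_!$, and property (ii) follows because the vertical tangent microbundle of a smooth bundle is the microbundle underlying $T_v E$, so that $c(\tau_v\pi)$ is the image of the ordinary $c(T_v E)$ under the map $H^*(BSO(d);\bF) \to H^*(B\mathrm{STOP}(d);\bF)$ -- which by construction carries our chosen generators to the chosen generators -- while $\pi_!$ agrees there with smooth fibre integration. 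I expect the main obstacle to be marshalling the manifold-topology input behind the rational classes on $B\mathrm{STOP}(d)$, namely Novikov's theorem and the Kirby--Siebenmann computation of $H^*(B\mathrm{TOP};\bQ)$, together with the careful construction of the fibre-integration map for topological (rather than smooth) manifold bundles and the verification that it restricts to the smooth one.
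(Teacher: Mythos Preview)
Your approach is essentially the paper's own for the core construction: form the fibrewise diagonal microbundle, invoke Kister--Mazur to replace it by an oriented $\bR^d$-bundle, produce Euler and Stiefel--Whitney classes from the Thom class and rational Pontrjagin classes from the rational equivalence $BO \simeq_{\bQ} B\mathrm{TOP}$ (equivalently, finiteness of $\pi_*(TOP/O)$), and then push forward along the Gysin map. The paper carries this out verbatim when $B$ is a compact topological manifold, so that $E$ is paracompact and Kister--Mazur applies on the nose.

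Where you and the paper diverge is in the passage to arbitrary base. You propose to run the whole construction directly for any $B$, but you should be aware that Kister--Mazur, as stated, needs the base of the microbundle (here $E$) to be paracompact or to have CW homotopy type; for an arbitrary $B$ this is not automatic, and you do not address it. The paper sidesteps this by a bordism argument: having defined $\kappa^{TOP}_c(\pi)$ only for bundles over closed oriented manifolds, it observes that $B \mapsto \int_B \kappa^{TOP}_c(\pi)$ is a bordism invariant which kills products with positive-dimensional manifolds, hence factors through $\Omega^{SO}_*(B\Homeo^+(M)) \otimes_{\Omega^{SO}_*} \bF \cong H_*(B\Homeo^+(M);\bF)$ and so defines a universal cohomology class (and similarly with unoriented bordism when $\cha(\bF)=2$). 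Your direct route can be salvaged---work on the universal bundle over a CW model of $B\Homeo^+(M)$, where $E$ does have CW type, and then pull back---but as written it has a small gap. Each method has its merits: yours is more conceptually transparent once the technical hypothesis is checked, while the paper's bordism argument cleanly isolates the well-definedness issue to compact manifold bases where nothing can go wrong.
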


For both block bundles and topological bundles, we will show that the characteristic classes are in fact defined on suitable classifying spaces (Theorem \ref{thm:Aprime} and Proposition \ref{prop4.2}).

On smooth bundles the characteristic classes $\kappa_c$ satisfy the following obvious property: if the monomial $c$ contains a Pontrjagin class $p_i$ with $2i > d$, then $\kappa_c$ vanishes on any smooth bundle $\pi : E \to B$ with $d$-dimensional fibres. This is simply because $p_i(T_v E)$ already vanishes under these conditions. This property is by no means clear for topological bundles: we refer the reader to \cite{ReisWeiss} for a recent detailed discussion of this question. For block bundles however, the analogue of this property fails: we provide a counterexample.

\begin{MainThm}\label{thm:counterexample}
There exists a smooth oriented block bundle over a simplicial complex homeomorphic to $S^{12}$ with fibres homotopy equivalent to $\bH \bP^2$, having $\tilde{\kappa}_{p_5} \neq 0$.
\end{MainThm}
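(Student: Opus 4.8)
The plan is to realise the block bundle as a family of smooth structures on $\bH\bP^2$ parametrised by $S^{12}$ --- that is, from a class in $\pi_{12}$ of the smooth block structure space $\widetilde{\cS}(\bH\bP^2)$ --- and to detect $\tilde\kappa_{p_5}$ on it by a characteristic number of its $20$-dimensional total space. One uses the standard facts $p(T\bH\bP^2) = 1 + 2u + 7u^2$ and $\cl(T\bH\bP^2) = 1 + \tfrac23 u + u^2$, where $u \in H^4(\bH\bP^2;\bQ)$ is the generator and $\cl$ is the Hirzebruch $L$-class; write $s \in H^{12}(S^{12};\bQ)$ for the generator. The starting observation is that for any smooth oriented block bundle $p\colon E \to B$ with closed oriented $8$-dimensional fibres over a closed oriented $12$-manifold $B$, the class $\tilde\kappa_{p_5}(p,\cA) \in H^{12}(B;\bQ)$ evaluates on $[B]$ to $\langle p_5(T_v E), [E]\rangle$; for $B = S^{12}$ this moreover equals $\langle p_5(TE), [E]\rangle$, since $T_v E$ differs stably from $TE$ by $p^*TS^{12}$, which is stably trivial. (That $\tilde\kappa_{p_5}$ is built from the vertical tangent bundle on $B\blockdiff(\bH\bP^2)$ is part of Theorem~\ref{thm:Aprime}.)

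A class $\xi \in \pi_{12}\widetilde{\cS}(\bH\bP^2)$ yields a smooth oriented block bundle $p\colon E \to S^{12}$ with fibre $\bH\bP^2$ and a fibrewise homotopy equivalence $\bar q\colon E \to S^{12}\times\bH\bP^2$; in particular $\bar q$ is a degree one normal map. I would invoke the surgery exact sequence for the block structure space of the simply connected $8$-manifold $\bH\bP^2$,
\[
L_{21}(\bZ) \longrightarrow \pi_{12}\widetilde{\cS}(\bH\bP^2) \longrightarrow [\Sigma^{12}\bH\bP^2_+, G/O] \xrightarrow{\ \sigma\ } L_{20}(\bZ),
\]
noting $L_{21}(\bZ) = 0$, so that $\pi_{12}\widetilde{\cS}(\bH\bP^2) \cong \ker\sigma$. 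Rationally $L_{20}(\bZ)\otimes\bQ \cong \bQ$ and $G/O \simeq_\bQ \prod_{i\geq1} K(\bQ,4i)$, whence $[\Sigma^{12}\bH\bP^2_+, G/O]\otimes\bQ \cong \bigoplus_i \widetilde H^{4i}(\Sigma^{12}\bH\bP^2_+;\bQ) \cong \bQ^3$, with summands for $i \in \{3,4,5\}$; one coordinatises it by the Pontrjagin classes $\hat p_3 = a\cdot s$, $\hat p_4 = b\cdot(s\smile u)$, $\hat p_5 = c\cdot(s\smile u^2)$ of the virtual bundle $j_*\nu$ on $S^{12}\times\bH\bP^2$ classified by the composite of the normal invariant $\nu$ of $\xi$ with $G/O \to BO$. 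As $\bar q$ has degree one, $\bar q_*[E] = [S^{12}\times\bH\bP^2]$ and $TE = \bar q^*(T(S^{12}\times\bH\bP^2) - j_*\nu)$ stably, so
\[
\langle p_5(T_v E), [E]\rangle = \big\langle \big(\mathrm{pr}_2^* p(T\bH\bP^2)\cdot p(j_*\nu)^{-1}\big)_{[20]},\ [S^{12}\times\bH\bP^2]\big\rangle .
\]
All products of two or more of the $\hat p_i$ vanish (each contains $s^2 = 0$), so expanding with $p(T\bH\bP^2) = 1 + 2u + 7u^2$ gives, up to an overall sign, $\tilde\kappa_{p_5}(\xi) = 7a + 2b + c$.

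For the surgery obstruction, a degree one normal map $g\colon N \to S^{12}\times\bH\bP^2$ with normal invariant $(a,b,c)$ has $\sigma(g) = \tfrac18(\sign N - \sign(S^{12}\times\bH\bP^2)) = \tfrac18\sign N$, and by Hirzebruch $\sign N = \langle \cl_5(TN), [N]\rangle$; so the same computation with $\cl$ replacing $p$, using $\cl(T\bH\bP^2) = 1 + \tfrac23 u + u^2$ and $\cl_i(j_*\nu) = s_i\hat p_i$, where $s_i$ is the coefficient of $p_i$ in the Hirzebruch polynomial $\cl_i$, yields $\sigma(\xi) = c_0(s_3 a + \tfrac23 s_4 b + s_5 c)$ for some nonzero rational $c_0$; here $s_3 = \tfrac{62}{945}$ and $s_5 = \tfrac{5110}{467775}$. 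The linear functionals $7a + 2b + c$ and $s_3 a + \tfrac23 s_4 b + s_5 c$ are not proportional: proportionality would force $s_3/s_5 = 7$, whereas $s_3/s_5 = \tfrac{3069}{511}$. Hence $\ker\sigma \not\subseteq \ker(\tilde\kappa_{p_5})$; indeed $(a,b,c) = (-s_5, 0, s_3) \in \ker(\sigma\otimes\bQ)$, and since $\pi_{12}\widetilde{\cS}(\bH\bP^2)\otimes\bQ \cong \ker(\sigma)\otimes\bQ$, a suitable positive multiple is realised by an honest class $\xi$ with $\tilde\kappa_{p_5}(\xi)$ a nonzero rational multiple of $s_3 - 7 s_5 \neq 0$. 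Classifying the corresponding smooth oriented block bundle by $S^{12} \to \widetilde{\cS}(\bH\bP^2) \to B\blockdiff(\bH\bP^2)$, pulling it back over a triangulation of $S^{12}$, and appealing to naturality (Theorem~\ref{thm:A}(i)), one obtains the required example: its fibre is $\bH\bP^2$, and $\tilde\kappa_{p_5} \neq 0$.

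I expect the real work to lie in the two identifications underlying the third step. First, that $\tilde\kappa_{p_5}$ on these block bundles equals the total-space number $\langle p_5(T_v E), [E]\rangle$ and that $T_v E = q^* T\bH\bP^2 - \bar q^*(j_*\nu)$ stably (with $q = \mathrm{pr}_2\circ\bar q$): this depends on the model of the vertical tangent bundle of a smooth block bundle behind Theorem~\ref{thm:Aprime} and on the standard surgery-theoretic description of the tangent bundle of the source of a normal map. Second, that $\sigma$ is the stated linear functional, precisely enough to see the non-proportionality; once its $a$- and $c$-coefficients are pinned to $s_3$ and $s_5$, the inequality $s_3 \neq 7 s_5$ is a routine check. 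The remaining ingredients --- the surgery exact sequence for $\bH\bP^2$, the rational computation of $[\Sigma^{12}\bH\bP^2_+, G/O]$, and simplicial approximation of the classifying map --- are standard.
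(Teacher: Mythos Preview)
Your argument is correct and reaches the same conclusion by a route that is close in spirit to the paper's but packaged differently. The paper first produces, via a packaged surgery result (Davis, Theorem~6.5), a single homotopy equivalence $f\colon E\to S^{12}\times\bH\bP^2$ with $\cl(TE)=f^*(\cl(T(S^{12}\times\bH\bP^2))+R\cdot x\cdot y)$, then appeals to Casson's fibering theorem to upgrade $\pi_1\circ f$ to a block bundle, checking the Casson surgery obstruction by hand; finally it computes $p_5(TE)$ explicitly from the $\cl$-polynomials. You instead work directly in $\pi_{12}\widetilde{\cS}(\bH\bP^2)$ via the block surgery exact sequence, express both $\tilde\kappa_{p_5}$ and the surgery obstruction as linear functionals on the rational normal invariants, and show they are not proportional. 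Your packaging absorbs the Casson obstruction into the exact sequence itself (no separate fibering check), at the cost of invoking the Quinn/Sullivan--Wall exact sequence for the block structure space, a somewhat heavier piece of machinery than Casson's theorem over spheres. Conversely, the paper's version is more explicit: it exhibits a specific manifold and specific nonzero value of $p_5$. The numerical content is the same---your check $s_3/s_5=\tfrac{3069}{511}\neq 7$ is equivalent to the paper's computation that the $\cl_4$-perturbation propagates to a nonzero $p_5$.

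The two points you flag as needing care are exactly the right ones. The identification $T_v^sE\cong_s TE-p^*TS^{12}$ is Lemma~\ref{lem:BlockBundleNormalManifold} in the paper; combined with the standard formula $TE\cong_s\bar q^*(T(S^{12}\times\bH\bP^2)-j_*\nu)$ for a degree one normal map this gives your tangent bundle formula. And that $\sigma\otimes\bQ$ is computed by the difference of $\cl_5$-numbers is the Hirzebruch signature theorem together with the description of $L_{4k}(\bZ)\otimes\bQ$ via signature, exactly as you say.
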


The existence of generalised MMM-classes for topological bundles and smooth block bundles may be used to prove the following theorem, comparing smooth automorphisms of certain basic manifolds with their continuous or smooth block automorphisms. Let us write $W^{2n}_g := \#^g S^n \times S^n$ for the connected-sum of $g$ copies of $S^n \times S^n$, and consider homeomorphisms and diffeomorphisms of $W_g^{2n}$ relative to a fixed disc $D^{2n} \subset W^{2n}_g$. 

\begin{MainThm}\label{thm:app}
The maps
$$B\Diff(W_{g}^{2n}, D^{2n}) \lra B\Homeo(W_{g}^{2n}, D^{2n})$$
and
$$B\Diff(W_{g}^{2n}, D^{2n}) \lra B\blockdiff(W_{g}^{2n}, D^{2n})$$
are surjective on rational cohomology in degrees $* \leq \tfrac{g-4}{2}$, and injective on rational cohomology in degrees $* \leq \min(\tfrac{2n-7}{2}, \tfrac{2n-4}{3})$. In this range of degrees the common rational cohomology is the polynomial algebra generated by generalised MMM-classes described in \cite[Theorem 1.1]{GRW2}.
\end{MainThm}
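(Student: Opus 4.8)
The plan is to leverage the fact (due to Galatius--Randal-Williams, \cite{GRW2}) that the map $B\Diff(W_g^{2n}, D^{2n}) \to \Omega^\infty_\bullet MT\theta_n$ to a certain infinite loop space is a homology isomorphism in degrees $* \le \tfrac{g-3}{2}$, where $MT\theta_n$ is the Thom spectrum associated to the $n$-connected cover $\theta_n : B \to BSO(2n)$, and that the rational cohomology of the target in a range is the free graded-commutative algebra on the MMM-classes $\kappa_c$ indexed by monomials $c$ in the Pontrjagin and Euler classes of degree $> 2n$. So the strategy is to show that the corresponding statements hold with $\Diff$ replaced by $\blockdiff$ and by $\Homeo$: namely that the relevant rational cohomology is, in the stated range, again a polynomial algebra on classes which, by naturality, must be the images of the $\kappa_c$.

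First I would handle the \emph{surjectivity} statements, which are the easy direction. The classes $\tilde\kappa_c$ on $B\blockdiff(W_g^{2n},D^{2n})$ and $\kappa_c^{TOP}$ on $B\Homeo(W_g^{2n},D^{2n})$ are defined by Theorems \ref{thm:A} and \ref{thm:B} (applied on the respective classifying spaces, using Theorem \ref{thm:Aprime} and Proposition \ref{prop4.2}), and by part (ii) of each theorem they pull back to the usual $\kappa_c$ along the forgetful maps from $B\Diff$. Since the $\kappa_c$ generate $H^*(B\Diff(W_g^{2n},D^{2n});\bQ)$ in degrees $* \le \tfrac{g-4}{2}$ by \cite[Theorem 1.1]{GRW2} (the Euler class $e$ of the stabilised vertical tangent bundle does not enter here because $2n$ is even and $e$ has degree $2n$, but more importantly one must be slightly careful: the generators in the GRW range are $\kappa_c$ with $c$ a monomial in $p_{n/2+1},\dots$ and possibly $e\cdot p_i$'s, all admissible for our Theorems A and B), naturality forces the forgetful maps to be surjective on $\bQ$-cohomology in that range. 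The only subtlety is that in the block and topological settings the vertical tangent bundle is only a \emph{stable} vector bundle (resp.\ a Euclidean, resp.\ a block, microbundle), so one must check that the monomials $c$ occurring as GRW generators are exactly the ones for which $\tilde\kappa_c$ or $\kappa_c^{TOP}$ is defined; by the hypotheses on $c$ in Theorems A and B this is the case, once one notes that $p_i(T_vE)$ for $2i \le 2n$ plays no role among the generators.

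For the \emph{injectivity} statements one needs an upper bound matching the lower bound from GRW: one must show that in degrees $* \le \min(\tfrac{2n-7}{2}, \tfrac{2n-4}{3})$ the cohomology of $B\blockdiff$ and of $B\Homeo$ is \emph{no larger} than that of $B\Diff$, i.e.\ the forgetful maps are injective there. This is where I expect the real work to be, and it is a smoothing-theory argument rather than anything about MMM-classes. One compares the three spaces via the fibration sequences coming from surgery/smoothing theory: the homotopy fibre of $B\Diff(W_g,D) \to B\blockdiff(W_g,D)$ is, after a single stabilisation and in a range, an infinite loop space built from $\Omega^\infty$ of a Whitehead-spectrum / $L$-theory / algebraic $K$-theory term, and likewise the homotopy fibre of $B\Diff \to B\Homeo$ is controlled by $\mathrm{Top}/\mathrm{O}$-type data; Morlet's theorem and the work of Kupers--Randal-Williams on the Torelli spaces of $W_g$ identify the relevant fibre as highly connected, roughly $(2n-5)$-connected or so, which feeds into the stated numerical range $\min(\tfrac{2n-7}{2},\tfrac{2n-4}{3})$. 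Concretely: Weiss fibre sequences and the known homotopy type of $\mathrm{Top}(2n)/\mathrm{O}(2n)$ in low degrees give that $B\Diff \to B\Homeo$ is $\tfrac{2n-?}{?}$-connected, and the corresponding block statement uses that the block structure space of $W_g$ rel $D$ is rationally trivial in a range (Berglund--Madsen, or \cite{GRW2} combined with $L$-theory computations). Assembling these, the forgetful maps are rationally isomorphisms on cohomology in the claimed range, so the common rational cohomology is the polynomial algebra on the $\kappa_c$ as asserted; the main obstacle is book-keeping the precise connectivity estimates for the smoothing-theory fibres so that the two numerical bounds $\tfrac{2n-7}{2}$ and $\tfrac{2n-4}{3}$ come out, but none of this requires new ideas beyond assembling Morlet's theorem, the Weiss fibre sequence, and the Galatius--Randal-Williams homological stability and stable homology computation.
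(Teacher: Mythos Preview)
Your surjectivity argument is correct and is exactly the paper's: Theorems~\ref{thm:A} and~\ref{thm:B} (via Theorem~\ref{thm:Aprime} and Proposition~\ref{prop4.2}) lift each $\kappa_c$ to $B\blockdiff$ and $B\Homeo$, and these generate $H^*(B\Diff(W_g^{2n},D^{2n});\bQ)$ in the GRW range. Your aside about which monomials occur is garbled---the generators are $\kappa_c$ for \emph{all} monomials in $e,p_{\lceil n/4 \rceil+1},\ldots,p_{n-1}$ of degree $>2n$, so low Pontrjagin classes certainly appear---but this is harmless since Theorems~A and~B cover every such $c$.

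Your injectivity argument, however, is not a proof: it is a list of tools, several of which are the wrong ones, and none of the actual work is done. The paper treats the two cases separately and concretely. For $B\Diff \to B\blockdiff$ it simply quotes \cite[Theorem~5.1]{ERW}, which packages Waldhausen, Igusa, and Farrell--Hsiang to give a rational cohomology isomorphism in degrees $\le \min(\tfrac{2n-7}{2},\tfrac{2n-4}{3})$. For $B\Diff \to B\Homeo$ the paper analyses the homotopy fibre $F$ via smoothing theory \cite[Ess.~V]{KS}: $F$ is homotopy equivalent to a union of components of the space $\Gamma(W_g^{2n},D^{2n})$ of lifts of the tangent map along $BO(2n) \to BTOP(2n)$. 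A handle decomposition of $W_g^{2n}$ (one $0$-handle, $2g$ $n$-handles, one $2n$-handle) reduces this section space to $\Omega^n(TOP(2n)/O(2n))^{2g}$ fibred over by $\Omega^{2n}(TOP(2n)/O(2n))$. The former is handled by the $(2n+1)$-connectivity of $TOP(2n)/O(2n) \to TOP/O$ together with finiteness of $\pi_*(TOP/O)$; the latter is identified, via the Alexander trick, with $B\Diff(D^{2n},\partial D^{2n})$, whose rational homotopy vanishes in the stated range by Farrell--Hsiang \cite{FHs}. One concludes that $\pi_0(F)$ is finite and each component has trivial rational homology in the range, so the Leray--Serre spectral sequence collapses to give split injectivity. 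None of Kupers--Randal-Williams, Berglund--Madsen, Morlet, or the Weiss fibre sequence enters, and no $L$-theory or block structure space computation is needed.
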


\subsection{Acknowledgements}

We would like to thank Wolfgang Steimle for introducing us to Casson's work, which plays a central role in the proof of Theorem \ref{thm:counterexample}. O.\ Randal-Williams was supported by the Herchel Smith Fund.

\section{Block bundles}\label{section:block-bundles}

We will first review some basic notions from the theory of block bundles, essentially from \cite{RSII}. Throughout we will use the term \emph{semi-simplicial set} to mean a ``simplicial set without degeneracies", i.e.\ a $\Delta$-set in the sense of \cite{RSI}.

Suppose that $M$ is a smooth, closed manifold. The basic example of a block bundle with fibre $M$ over a simplex $\Delta^p$ is a map
\begin{equation}\label{eq:BasicBlockBundle}
p : \Delta^p \times M \lra \Delta^p
\end{equation}
such that for each face $\sigma \subset \Delta^p$ the map $\pi$ sends $\sigma \times M$ to $\sigma$. Roughly speaking, a block bundle over $\vert K \vert$, the geometric realisation of a simplicial complex, is a map assembled by gluing together maps of this form over the simplices of $K$. The natural form of gluing to allow is that of a block diffeomorphism.

\begin{definition}\label{defn:blockdiffs}
A \emph{block diffeomorphism} of $\Delta^p \times M$ is a diffeomorphism
$$f : \Delta^p \times M \lra \Delta^p \times M$$
which for each face $\sigma \subset \Delta^p$ restricts to a diffeomorphism of $\sigma \times M$.

The semi-simplicial group of {block diffeomorphisms} $\blockdiff(M)_\bullet$ has as its group of $p$-simplices the set of all block diffeomorphisms of $\Delta^p \times M$. The group operation is by composition, and the semi-simplicial structure is given by restriction to the faces. Similarly, if $M$ is oriented we may define $\blockdiff^+(M)_\bullet$ using only the orientation-preserving diffeomorphisms of $\Delta^p \times M$.
\end{definition}

The absolutely key point of this definition is that we do \emph{not} require that the diffeomorphism $f$ commutes with projection to $\Delta^p$: this difference will distinguish diffeomorphisms and block diffeomorphisms.

The topological group of diffeomorphisms of $M$, $\Diff(M)$, equipped with the Whitney $C^\infty$-topology, is contained in the block diffeomorphism group in the following manner. Call a continuous map $\sigma: \Delta^p  \to \Diff(M)$ \emph{smooth} if the induced homeomorphism $(t, x) \mapsto (t, \sigma(t)\cdot x) : \Delta^p \times M \to \Delta^p \times M$ is a diffeomorphism. This defines a semi-simplicial subgroup $\mathrm{Sing}_{\bullet}^{sm} \Diff (M) \subset \mathrm{Sing}_{\bullet} \Diff (M)$ of the semi-simplicial group of singular simplices, and the inclusion is a homotopy equivalence.

The diffeomorphism of $\Delta^p \times M$ induced by an element of $\mathrm{Sing}_{p}^{sm} \Diff (M)$ preserves the face structure (and, in fact, the projection to $\Delta^p$). This observation determines an inclusion of semi-simplicial groups 
$$\mathrm{Sing}_{\bullet}^{sm} \Diff (M) \hookrightarrow \blockdiff(M)_{\bullet}.$$
The classifying space $B\blockdiff(M)$ is by definition the geometric realisation of the bi-semi-simplicial set $N_{\bullet} \blockdiff(M)_{\bullet}$ obtained by taking the semi-simplicial nerve levelwise. There are maps
$$B\Diff(M) \overset{\sim}\longleftarrow \vert N_\bullet \mathrm{Sing}_{\bullet}^{sm} \Diff (M)\vert \lra \vert N_{\bullet} \blockdiff(M)_{\bullet} \vert = B\blockdiff(M),$$
where the leftwards map is induced by the evaluation
$$(N_p \mathrm{Sing}_{q}^{sm} \Diff (M)) \times \Delta^q \lra N_p \Diff (M),$$
and is a weak homotopy equivalence. We will always use these maps to compare ordinary and block diffeomorphisms.

Let us describe another model for $B\blockdiff(M)$, which is a semi-simplicial (as opposed to bi-semi-simplicial) set and has the advantage of being more geometric in flavour. We will then prove that the two models are homotopy equivalent. 

\begin{definition}\label{def-blockbundlespace}
Let $\mathcal{M}(M ; \bR^N)_{p}$ be the set of submanifolds $W \subset \Delta^p \times \bR^N$ which are transverse to $\sigma \times \bR^N$ for each face $\sigma \subset \Delta^p$, and which are diffeomorphic to $\Delta^p \times M$ via a diffeomorphism taking $W \cap (\sigma \times \bR^N)$ to $\sigma \times M$. Define face maps $d_i : \mathcal{M}(M ; \bR^N)_{p} \to \mathcal{M}(M ; \bR^N)_{p-1}$ by intersecting $W$ with the $i$th face of $\Delta^p$; this gives a semi-simplicial set $\mathcal{M}(M ; \bR^N)_{\bullet}$. Finally, let $\mathcal{M}(M)_{\bullet} := \cup_N \mathcal{M}(M ; \bR^N)_{\bullet}$.

Similarly, if $M$ is oriented define $\mathcal{M}(M)_{\bullet}^+$ using oriented manifolds $W$ and orientation-preserving diffeomorphisms to $\Delta^p \times M$.
\end{definition}

\begin{proposition}
There is a weak homotopy equivalence $\vert\cM(M)_{\bullet}\vert \simeq \vert N_{\bullet} \blockdiff(N)_{\bullet}\vert$. Similarly, if $M$ is oriented there is a weak homotopy equivalence $\vert\cM^+(M)_{\bullet}\vert \simeq \vert N_{\bullet} \blockdiff^+(N)_{\bullet}\vert$.
\end{proposition}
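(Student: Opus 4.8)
The plan is to realise both sides as models for the homotopy quotient $\widetilde{\cM}(M)_\bullet /\!\!/ \blockdiff(M)_\bullet$, where $\widetilde{\cM}(M)_\bullet$ is a ``total space'' of parametrised block submanifolds carrying a free $\blockdiff(M)_\bullet$-action. Concretely, define $\widetilde{\cM}(M;\bR^N)_\bullet$ by declaring a $p$-simplex to be a pair $(W,\ell)$ where $W \subset \Delta^p \times \bR^N$ is as in Definition~\ref{def-blockbundlespace} and $\ell \colon \Delta^p \times M \to W$ is a diffeomorphism with $\ell(\sigma \times M) = W \cap (\sigma \times \bR^N)$ for every face $\sigma \subset \Delta^p$; the face maps are intersection with faces, and $\widetilde{\cM}(M)_\bullet := \cup_N \widetilde{\cM}(M;\bR^N)_\bullet$. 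Forgetting $\ell$ gives a map $\pi_\bullet \colon \widetilde{\cM}(M)_\bullet \to \cM(M)_\bullet$, and precomposition $\ell \mapsto \ell \circ f^{-1}$ gives a levelwise-free action of $\blockdiff(M)_\bullet$ on $\widetilde{\cM}(M)_\bullet$ whose set of orbits in degree $p$ is exactly $\cM(M)_p$: indeed, if $(W,\ell)$ and $(W,\ell')$ lie over the same $W$, then $\ell^{-1} \circ \ell'$ is a block diffeomorphism of $\Delta^p \times M$. Since $W$ is recovered as the image of $\ell$, a $p$-simplex of $\widetilde{\cM}(M)_\bullet$ is the same datum as a \emph{block embedding}: an embedding $e \colon \Delta^p \times M \hookrightarrow \Delta^p \times \bR^\infty$ transverse to every $\sigma \times \bR^\infty$ with $e^{-1}(\sigma \times \bR^\infty) = \sigma \times M$.

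The main step is to prove that $\vert \widetilde{\cM}(M)_\bullet \vert$ is weakly contractible, which I would do by verifying that every semi-simplicial map $\partial\Delta^p \to \widetilde{\cM}(M)_\bullet$ extends over $\Delta^p$, for all $p \geq 0$: a semi-simplicial set with this extension property has weakly contractible realisation (for $p = 0$ the statement is simply the existence of an embedding $M \hookrightarrow \bR^\infty$, and for $p \geq 1$ one deduces that all homotopy groups of $\vert\widetilde{\cM}(M)_\bullet\vert$ vanish, by simplicial approximation followed by filling the cells of a triangulated disc one at a time). Such a map is precisely a block embedding of $(\partial\Delta^p) \times M$ into $(\partial\Delta^p) \times \bR^\infty$, and one must extend it over $\Delta^p$. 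Using a collar of $\partial\Delta^p$ in $\Delta^p$ one first extends over a neighbourhood of the boundary, retaining the collar coordinate as the $\Delta^p$-coordinate so that transversality to the faces is preserved; over the complementary disc there are no proper faces to contend with, and the extension then exists because the disc is contractible and the space $\Emb(M,\bR^\infty) = \colim_N \Emb(M,\bR^N)$ of embeddings is weakly contractible. This is exactly the sort of collar-and-general-position argument standard in block bundle theory (\cite{RSII}); arranging it to be genuinely compatible with transversality to \emph{all} faces of $\Delta^p$, rather than merely with the face-preserving condition, is the one place where real care is needed, and I expect it to be the main obstacle in the proof.

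Granting the contractibility, the conclusion is formal. A free action of a group $G$ on a set $S$ makes the Borel construction $S \times_G EG$ weakly equivalent to the orbit set $S/G$; applying this in each simplicial degree and realising, the homotopy quotient $\widetilde{\cM}(M)_\bullet /\!\!/ \blockdiff(M)_\bullet$ is weakly equivalent to $\vert\cM(M)_\bullet\vert$. On the other hand, the homotopy quotient sits in the fibration sequence $\vert\widetilde{\cM}(M)_\bullet\vert \to \widetilde{\cM}(M)_\bullet /\!\!/ \blockdiff(M)_\bullet \to \vert N_\bullet\blockdiff(M)_\bullet\vert$ (after realisation, $\vert\blockdiff(M)_\bullet\vert$ is a topological group because inversion is a semi-simplicial map, and realisation carries the relevant bar and Borel constructions to their topological counterparts), so weak contractibility of $\vert\widetilde{\cM}(M)_\bullet\vert$ identifies $\widetilde{\cM}(M)_\bullet /\!\!/ \blockdiff(M)_\bullet$ with $\vert N_\bullet\blockdiff(M)_\bullet\vert$. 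Chaining the two weak equivalences proves the proposition.

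The oriented statement is proved by the identical argument: one takes $W$ oriented and $\ell$ orientation-preserving throughout, so that the orientation of $W$ is the one induced from the product orientation of $\Delta^p \times M$, and replaces $\blockdiff(M)_\bullet$ by $\blockdiff^+(M)_\bullet$ and $\cM(M)_\bullet$ by $\cM^+(M)_\bullet$.
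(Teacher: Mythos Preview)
Your approach is correct and is essentially the paper's: the paper builds the comparison through an explicit bi-semi-simplicial set $X_{p,q} = \{(W, f_0, \ldots, f_p) : W \in \cM(M)_q,\ f_i : W \overset{\cong}{\to} \Delta^q \times M\}$, which is precisely a model for your levelwise Borel construction $\widetilde{\cM}(M)_\bullet /\!\!/ \blockdiff(M)_\bullet$, and the two maps out of $|X_{\bullet,\bullet}|$ are shown to be equivalences using exactly the two ingredients you isolate (freeness/transitivity of the action on each fibre, and contractibility of the block embedding space via Whitney). One small caution: geometric realisation of \emph{semi}-simplicial sets does not preserve products on the nose, so your claim that $|\blockdiff(M)_\bullet|$ is a topological group and the ensuing fibration sequence require care; the paper sidesteps this by working directly with the bi-semi-simplicial model and checking that $G_{p,\bullet}$ is a Kan fibration for each fixed $p$.
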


\begin{proof}
We will prove the first statement, as the second is a minor modification. We introduce an auxiliary bi-semi-simplicial set $X_{\bullet,\bullet}$. The set $X_{p,q}$ of $(p,q)$-simplices consists of tuples $(W,f_0,\ldots,f_p)$, where $W\in \cM(M)_q$ and $f_i:W \to \Delta^q \times M$ is a diffeomorphism as in Definition \ref{def-blockbundlespace}. The $i$th face map in the $q$ direction is given by intersecting $W$ with the $i$th face of $\Delta^q$ and then restricting the $f_j$, and the $i$th face maps in the $p$ direction is by forgetting $f_i$. There is an augmentation $F_{\bullet, q}:X_{\bullet,q} \to \cM(M)_q$ by forgetting all the $f_i$.

The fibre of $F$ over each $q$-simplex has contractible geometric realisation, by general nonsense. Namely, for any nonempty set $Y$, the semi-simplicial set $Y_{\bullet}$ with $Y_p=Y^{p+1}$ and the forgetful maps as simplicial structure maps, is contractible. The preimage $F^{-1}(W)$ is just this construction, applied to the set of diffeomorphisms $W \cong \Delta^q \times M$, which is required to be nonempty by definition. Thus the map
$$\vert F_{\bullet, q}\vert : \vert X_{\bullet, q} \vert \lra  \cM(M)_q$$
is a homotopy equivalence, and so
$$\vert F_{\bullet, \bullet}\vert : \vert X_{\bullet, \bullet}\vert \lra \vert \cM(M)_\bullet \vert$$
is too.

There is a map  of bi-semi-simplicial sets $G_{p,q}: X_{p,q}\to N_p \blockdiff (M)_q$ given by 
$$(W,f_0,\ldots,f_p) \mapsto (f_{0} f^{-1}_{1} , f_{1} f^{-1}_{2}, \ldots , f_{p-1} f^{-1}_{p}).$$
For fixed $p$, the semi-simplicial map $G_{p,\bullet}: X_{p,\bullet} \to N_{p} \blockdiff(M)_{\bullet}$ is a Kan fibration. Thus the homotopy fibre after geometric realisation may be computed simplicially. In the context of semi-simplicial sets, this means the following. The target of $G_{p,\bullet}$ may be made into a pointed semi-simplicial set (i.e.\ a semi-simplicial object in pointed sets) by choosing a ``basepoint'' $x_{\bullet}$, which is a sub-semi-simplicial set with a single element in each degree. Let $x:=(h_1,\ldots,h_p) \in N_p \blockdiff{M}_0$ be a $0$-simplex, so the $h_i$ are diffeomorphisms of $M$, and let $x_{q}$ be the $q$-simplex $(h_1 \times \id_{\Delta^q},\ldots,h_p \times \id_{\Delta^q}) \in N_p \blockdiff{M}_q$. We claim that $G_{p,\bullet}^{-1}(x_{\bullet}) \subset X_{p,\bullet}$ is contractible. It is the semi-simplicial set with $q$-simplices those $(W, f_0, \ldots, f_p)$ with $f_{i-1} f_i^{-1}=h_i \times \id_{\Delta^q}$. Since $h_i$ is fixed, $f_0$ determines all $f_i$ uniquely.
So $G_{p,\bullet}^{-1}(x_{\bullet})$ is isomorphic to the semi-simplicial set $A_{\bullet}$, where $A_q$ is the set of all $(W,f)$, $W \in \cM(M)_q$ and $f: W \to \Delta^q \times M$ is a diffeomorphism as in Definition \ref{def-blockbundlespace}. So $|A_{\bullet}|$ is the block embedding space of $M$ in $\bR^\infty$, which is contractible by Whitney's embedding theorem.
\end{proof}

The structure that is classified by the space $B \blockdiff (M) := \vert N_\bullet \blockdiff(M)_\bullet \vert \simeq \vert \mathcal{M}(M)_\bullet \vert$ is that of a {block bundle}. We define this notion only when the base is a simplicial complex, and for simplicity only when $M$ is a closed manifold.

\begin{definition}\label{defn:block-bundle}
Let $K$ be a simplicial complex and $p: E \to \vert K\vert$ be a continuous map. A \emph{block chart} for $E$ over a simplex $\sigma \subset K$ is a homeomorphism $h_\sigma: p^{-1} (\sigma) \to \sigma \times M$ which for every face $\tau \subset \sigma$ restricts to a homeomorphism $p^{-1}(\tau) \to \tau \times M$. A \emph{block atlas} is a set $\cA$ of block charts, at least one over each simplex of $K$, such that if $h_{\sigma_i}: p^{-1}(\sigma_i) \to \sigma_i \times M$, $i=0,1$, are two elements of $\cA$ then the composition $h_{\sigma_1} \circ h_{\sigma_0}^{-1}$ from $(\sigma_0 \cap \sigma_1) \times M$ to itself is a block diffeomorphism. A \emph{block bundle structure} is a maximal block atlas.  The resulting structure is a \emph{block bundle}.
\end{definition}

Suppose that $\pi: E \to \vert K \vert$ is a smooth fibre bundle with fibre $M$, and hence locally trivial. As simplices are contractible and paracompact, the restriction of $\pi$ to each simplex is a trivial bundle, and for each simplex $\sigma$ we may choose trivialisations $\pi^{-1}(\sigma) \overset{\sim}\to \sigma \times M$ over $\sigma$. These trivialisations provide a block atlas for $\pi$, exhibiting it as a block bundle. Hence every smooth fibre bundle over a simplicial complex yields a smooth block bundle.

If $(p : E \to \vert K \vert, \mathcal{A})$ is a block bundle, and $f: L \to K$ is a map of simplicial complexes, we define the pull-back block bundle to have as representing space the projection map $q: E \times_{\vert K \vert} \vert L \vert \to \vert L \vert$. The surjective simplicial map $f\vert_\tau : \tau \to f(\tau)$ expresses $\tau$ as the join $*_{v \in V}X_{v}$ of simplices $X_v = f^{-1}(v)$ indexed over the set $V$ of vertices of $f(\tau)$. For each block chart $h_{f(\tau)}$ we define a block chart over $\tau$ by
\begin{align*}
h_\tau : q^{-1}(\tau) = p^{-1}(f(\tau)) \times_{f(\tau)} \tau &\lra \tau \times M\\
\left(e, \sum_{v \in V} t_v \cdot x_v\right) & \longmapsto \left(\sum_{v \in V} t'_v \cdot x_v, \pi_M(h_{f(\tau)}(e))\right)
\end{align*}
where the $t'_v$ are defined by $\pi_{f(\tau)}(h_{f(\tau)}(e)) = \sum_{v \in V} t'_v \cdot v \in f(\tau)$. Note that there exists an $h_{f(\tau)}$ for each $\tau$, as $\mathcal{A}$ is assumed to be a maximal atlas and so contains a block chart for every simplex.

\begin{lemma}
The functions $h_\tau$ are well-defined homeomorphisms, and the transition maps $h_\tau \circ h_\sigma^{-1}$ are block diffeomorphisms of $(\tau \cap \sigma) \times M$.
\end{lemma}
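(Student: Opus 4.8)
The plan is to treat the two claims in turn. That each $h_\tau$ is a well-defined homeomorphism is essentially point-set topology together with bookkeeping for the join decomposition $\tau=*_{v\in V}X_v$. The one subtle point in the formula for $h_\tau$ is that $\sum_{v\in V}t'_v x_v$ is meaningful, i.e.\ that $x_v$ is defined ($t_v>0$) whenever $t'_v>0$; this holds because $h_{f(\tau)}$ is a block chart. Indeed, letting $\rho\subseteq f(\tau)$ be the face spanned by $\{v:t_v>0\}$, the point $p(e)=f(s)=\sum_v t_v v$ lies in the interior of $\rho$, so $e\in p^{-1}(\rho)$, and as $h_{f(\tau)}$ restricts to a homeomorphism $p^{-1}(\rho)\to\rho\times M$ we get $\pi_{f(\tau)}(h_{f(\tau)}(e))\in\rho$, i.e.\ $t'_v=0$ for $v\notin\rho$. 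The inverse of $h_\tau$ is $(s',m)\mapsto(e,s)$ with $e:=h_{f(\tau)}^{-1}(w,m)$, where $w\in f(\tau)$ has as barycentric coordinates the join-weights of $s'$, and $s$ is the point of $\tau$ with the join-coordinates $x_v$ of $s'$ and the weights read off from $p(e)$; the same argument for the block chart $h_{f(\tau)}^{-1}$ makes this well-defined, and the two composites are readily seen to be the identity. Continuity in both directions follows from continuity of $h_{f(\tau)}$ and $p$ and the standard quotient presentation of the join, the strata where a weight vanishes being treated as usual; and running the argument over a face $\tau'\subseteq\tau$ shows $h_\tau$ restricts to a homeomorphism over $\tau'$, so $h_\tau$ is indeed a block chart of $q$.

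For the transition maps I would first reduce to a single lemma. Set $\rho:=\tau\cap\sigma$, a common face, so $f(\rho)$ is a common face of $f(\tau)$ and $f(\sigma)$. By the support observation above, for $(s',m)\in\rho\times M$ the pair $h_\sigma^{-1}(s',m)=(e,s)$ has $e\in p^{-1}(f(\rho))$, so $h_\tau\circ h_\sigma^{-1}$ on $\rho\times M$ depends only on the restrictions over $f(\rho)$ of the two charts, i.e.\ on $g:=h_{f(\tau)}\circ h_{f(\sigma)}^{-1}$ restricted to $f(\rho)\times M$. A direct computation identifies $h_\tau\circ h_\sigma^{-1}$ with the map obtained from $g|_{f(\rho)\times M}$ by the very recipe defining $h_\tau$ from $h_{f(\tau)}$: present a point of $\rho\times M$ as a join-weight vector on $f(\rho)$, a family of join-coordinates, and a point of $M$; apply $g$ to the weight vector and the $M$-point and keep the join-coordinates. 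By the block-atlas axiom for $\cA$, $h_{f(\tau)}\circ h_{f(\sigma)}^{-1}$ is a block diffeomorphism of $(f(\tau)\cap f(\sigma))\times M$, hence $g|_{f(\rho)\times M}$ is a block diffeomorphism of $f(\rho)\times M$. So everything reduces to: \emph{if $Q\to R$ is a surjective simplicial map of simplices and $g$ is a block diffeomorphism of $R\times M$, then the map $\tilde g$ of $Q\times M$ given by the above recipe is a block diffeomorphism.}

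To prove this lemma I would work in barycentric coordinates on $Q$, since join coordinates are not smooth charts along the corner strata. Partition the vertices of $Q$ into blocks $G_0,\dots,G_k$ by their image vertices $v_0,\dots,v_k\in R$; then the recipe sends the barycentric coordinate $u_w$ of a vertex $w\in G_i$ to $u'_w=u_w\cdot r_i$, where $r_i$ is the pullback to $Q\times M$, along the affine map $f\times\mathrm{id}:Q\times M\to R\times M$, of the function $(\beta_i\circ g)/\beta_i$, with $\beta_i$ the $v_i$-th barycentric coordinate of $R$; and it sends the $M$-coordinate to the $M$-component of $g$ applied to the image of the point under $f\times\mathrm{id}$. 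Assume $R$ has positive dimension (otherwise $\tilde g$ is a product and there is nothing to prove). Then $\beta_i$, pulled back to $R\times M$, is a boundary-defining function for the facet $R_i\times M$, where $R_i\subset R$ is the facet opposite $v_i$, and $g$ preserves $R_i\times M$ since it is a block diffeomorphism; so $\beta_i\circ g$ vanishes precisely on $R_i\times M$, and the standard division lemma for smooth functions gives $\beta_i\circ g=\beta_i\cdot c_i$ with $c_i$ smooth and positive, whence $(\beta_i\circ g)/\beta_i=c_i$ is smooth on $R\times M$ and $r_i$ is smooth on $Q\times M$. Each $u'_w=u_w\cdot r_i$ is then smooth on $Q\times M$, and since the barycentric coordinates together with the $M$-coordinate embed $Q\times M$ smoothly, $\tilde g$ is smooth; the same argument for $g^{-1}$ shows $\tilde g$ is a diffeomorphism, and since the recipe manifestly sends each face $Q'\times M$ to itself (the blocks restrict to the blocks of $Q'$ and $g$ restricts over the corresponding face of $R$), $\tilde g$ is a block diffeomorphism. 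I expect the genuine obstacle to be exactly this smoothness verification — making the boundary-defining-function/division argument go through uniformly over all the nested corner strata of the join — so that one really obtains a smooth map, not merely a homeomorphism that is smooth on the interior.
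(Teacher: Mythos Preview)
Your proof is correct and follows the same route as the paper: identify $h_\tau \circ h_\sigma^{-1}$ explicitly as the map that keeps the join-coordinates $x_v$ and replaces the weights and $M$-component by those of $g:=h_{f(\tau)}\circ h_{f(\sigma)}^{-1}$ applied to $(\sum t_v v,m)$, then check it preserves faces and has the obvious inverse. The paper simply asserts this formula is ``clearly smooth'' and stops there; your Hadamard-type division argument $\beta_i\circ g=\beta_i\cdot c_i$ with $c_i$ smooth and positive is exactly what is needed to justify that assertion across the corner strata, so your write-up is in fact more complete than the paper's at this point.
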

\begin{proof}
To see $h_\tau$ is well-defined note that the only ambiguity is that if $t_v=0$ then $x_v$ is undefined. But in this case $p(e) = f\circ q(e, \sum t_v \cdot x_v) = \sum t_v \cdot v \in f(\tau)$ lies in the face opposite $v$, so $h_{f(\tau)}(e)$ also lies in the face opposite $v$, so $t'_v=0$ too. The function $h_\tau$ is clearly continuous, and a continuous inverse is given by the formula
\begin{align*}
h_\tau^{-1} : \tau \times M  &\lra q^{-1}(\tau) = p^{-1}(f(\tau)) \times_{f(\tau)} \tau\\
\left(\sum_{v \in V} t'_v \cdot x_v, m\right) & \longmapsto \left( h_{f(\tau)}^{-1}\left(\sum_{v \in V} t'_v \cdot v, m\right), \sum_{v \in V} t_v \cdot x_v\right)
\end{align*}
where the $t_v$ are defined by  $p(h_{f(\tau)}^{-1}(\sum_{v \in V} t'_v \cdot v, m)) = \sum_{v \in V} t_v \cdot v \in f(\tau)$.

If $f(\sigma \cap \tau)$ has vertices $V$, so that $\sigma \cap \tau = *_{v \in V} X_v$, then for $(\sum_{v \in V} t_v \cdot x_v, m) \in (\sigma \cap \tau) \times M$ we have
$$h_\tau \circ h^{-1}_\sigma \left(\sum_{v \in V} t_v \cdot x_v, m\right) = \left(\sum_{v \in V} \hat{t}_v \cdot x_v, h_{f(\tau)} \circ h_{f(\sigma)}^{-1}\left(\sum_{v \in V} t_v \cdot v, m\right)\right)$$
where $\pi_{f(\sigma \cap \tau)}(h_{f(\tau)} \circ h_{f(\sigma)}^{-1}(\sum t_v \cdot v, m)) = \sum \hat{t}_v \cdot v$. This is clearly smooth and preserves faces ($\hat{t}_v = 0$ if and only if $t_v=0$, as $h_{f(\tau)} \circ h_{f(\sigma)}^{-1}$ preserves faces), and $h_\sigma \circ h_\tau^{-1}$ is an inverse, so it is a block diffeomorphism.
\end{proof}

If the simplicial map $f: L \to K$ is simplexwise injective, there is a simpler, but equivalent, description of the pull-back block bundle. We again take the representing space to be $q: E \times_{\vert K \vert} \vert L \vert \to \vert L \vert$, equipped with block charts
$$q^{-1}(\tau) \approx p^{-1}(f(\tau)) \overset{h_{f(\tau)}}\lra f(\tau) \times M \approx \tau \times M$$
for each simplex $\tau$ of $L$ and each $h_{f(\tau)} \in \mathcal{A}$, where the two unlabelled homeomorphisms are induced by the homeomorphism $f\vert_\tau : \tau \to f(\tau)$. 

\begin{definition}\label{defn:concordance}
If $K$ is a simplicial complex, and $L$ is a triangulation of $\vert K \vert \times [0,1]$ which restricts to the triangulation $K$ at each end of the cylinder, then a block bundle $(p : E \to \vert L \vert, \mathcal{A})$ is called \emph{concordance} of block bundles on $\vert K \vert$. We say that the two block bundles on $\vert K \vert$ obtained by restricting $(p : E \to \vert L \vert, \mathcal{A})$ to $\vert K \vert \times \{0\}$ and $\vert K \vert \times \{1\}$ are \emph{concordant}.
\end{definition}

The notion of concordance defines a relation on the set of block bundles over $\vert K \vert$. It is clearly transitive and symmetric, and by choosing an ordering of $K$ so that we can form the cartesian product $K \times \Delta^1$, and pulling back $(p, \mathcal{A})$ along the projection $K \times \Delta^1 \to K$, we see that the concordance relation is reflexive.



\begin{proposition}\label{prop:classification}
If $K$ is a finite simplicial complex, the set of concordance classes of block bundles (with fibre $M$) is in bijection with the set of homotopy classes $[\vert K\vert,|\cM(M)_{\bullet}|]$.
\end{proposition}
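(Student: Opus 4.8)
The plan is to identify the semi-simplicial set $\cM(M)_\bullet$ as a classifying object for block bundles in the sense that concordance classes of block bundles over $\vert K\vert$ correspond to homotopy classes of simplicial maps $K \to \cM(M)_\bullet$ up to simplicial homotopy, and then to invoke the standard fact that for a Kan semi-simplicial set $X_\bullet$ and a finite simplicial complex $K$, the set of simplicial homotopy classes of maps from $K$ into $X_\bullet$ agrees with $[\vert K\vert, \vert X_\bullet\vert]$. (One must first check that $\cM(M)_\bullet$ is Kan: given a horn $\Lambda^p_i \to \cM(M)_\bullet$, i.e.\ compatible submanifolds of the faces, one uses a collar and an isotopy extension/Whitney embedding argument to fill it to a $p$-simplex; this is standard in the block bundle literature, cf.\ \cite{RSII}.)

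First I would construct, from a block bundle $(p : E \to \vert K\vert, \cA)$, a simplicial map $\phi : K \to \cM(M)_\bullet$. Over each simplex $\sigma$ of $K$ one has a block chart $h_\sigma : p^{-1}(\sigma) \xrightarrow{\sim} \sigma \times M$; composing with an embedding $\sigma \times M \hookrightarrow \sigma \times \bR^N$ (for $N$ large, using compactness of $K$) whose restriction to each face is transverse to $\tau \times \bR^N$, one gets an element of $\cM(M;\bR^N)_{\dim \sigma}$. The transition maps being block diffeomorphisms guarantees these choices can be made compatibly on faces, after a further isotopy to reconcile the chosen embeddings on overlaps — here the contractibility of the block embedding space (used in the proof of the proposition identifying the two models, i.e.\ Whitney's theorem) ensures the necessary choices exist and are essentially unique up to the moves that become simplicial homotopies. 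Conversely, a simplicial map $\phi : K \to \cM(M)_\bullet$ assigns to each simplex a submanifold $W_\sigma \subset \sigma \times \bR^N$ together with a chosen diffeomorphism to $\sigma \times M$; gluing the $W_\sigma$ along faces produces the total space $E$, the map $p$ is projection to $\vert K\vert$, and the chosen diffeomorphisms furnish a block atlas. These two constructions should be checked to be mutually inverse on the relevant sets.

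Next I would verify that the two constructions carry concordance to simplicial homotopy and back. A concordance is by definition a block bundle over a triangulation $L$ of $\vert K\vert \times [0,1]$; applying the construction above to $L$ gives a map $L \to \cM(M)_\bullet$, and precomposing with the two inclusions of $K$ recovers the maps classifying the two ends — this is exactly a simplicial homotopy in the semi-simplicial sense once one knows $L$ can be chosen (and subdivided) appropriately, which is where the Kan condition on $\cM(M)_\bullet$ enters to let one pass between ``simplicial homotopy through a triangulated cylinder'' and the combinatorial notion of homotopy of maps into a Kan complex. In the other direction, a simplicial homotopy $K \times \Delta^1 \to \cM(M)_\bullet$ (or a zig-zag of such, which is why we need Kan-ness to get an equivalence relation) produces a block bundle over a triangulation of $\vert K\vert \times [0,1]$, i.e.\ a concordance.

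I expect the main obstacle to be the bookkeeping in the forward direction: showing that the ambient embedding dimension $N$ and the embeddings themselves can be chosen simplex-by-simplex in a way that is coherent on all faces and independent, up to simplicial homotopy, of the choices made. This is precisely the point where one leans on the contractibility of block embedding spaces (Whitney) and on the fact — already established in the excerpt — that $\vert \cM(M)_\bullet\vert \simeq B\blockdiff(M)$, so that the ``space of choices'' is contractible and the resulting homotopy class is well-defined. A clean way to organise this is to prove the bijection first for the bi-semi-simplicial model $N_\bullet\blockdiff(M)_\bullet$, where a block bundle directly gives cocycle data (the transition block diffeomorphisms), and then transport along the weak equivalence of the two models; the finiteness of $K$ is used to make $B\blockdiff(M)$-valued classifying maps land in a finite skeleton and to invoke cellular approximation. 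The remaining verifications — that pullback of block bundles corresponds to precomposition of classifying maps, and that the constructions respect the operations used in Definition \ref{defn:concordance} to see the relation is reflexive — are routine.
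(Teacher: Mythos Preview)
Your proposal is correct and follows essentially the same route as the paper: verify that $\cM(M)_\bullet$ is Kan via Whitney embedding, use simplicial approximation to reduce $[\vert K\vert, \vert\cM(M)_\bullet\vert]$ to semi-simplicial maps, and pass between block bundles and such maps by embedding into $\vert K\vert \times \bR^N$. The paper's execution is slightly more streamlined in the forward direction: rather than embedding each block $\sigma \times M$ separately and then reconciling on overlaps, it directly embeds the total space $E \hookrightarrow \vert K\vert \times \bR^N$ by induction over simplices (so the compatibility on faces is built in), and then reads off the semi-simplicial map $K_\bullet \to \cM(M)_\bullet$ after choosing an ordering of the vertices. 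One small slip: a $p$-simplex of $\cM(M)_\bullet$ does \emph{not} come with a chosen diffeomorphism to $\Delta^p \times M$, only the existence of one; but this does not affect your argument, since such diffeomorphisms can be chosen when assembling the block atlas.
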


\begin{proof}
We may choose an ordering of the vertices of $K$, giving a semi-simplicial set $K_\bullet$ with homeomorphic geometric realisation (as we have supposed that $K$ is finite). The semi-simplicial set $\cM(M)_{\bullet}$ is easily seen to be Kan (this is a consequence of the Whitney embedding theorem), so by Rourke and Sanderson's simplicial approximation theorem \cite[Theorem 5.3]{RSI}, each map $f : \vert K\vert \to |\cM(M)_{\bullet}|$ is homotopic to the geometric realisation of a semi-simplicial map $f_\bullet : K_\bullet \to \cM(M)_{\bullet}$. Then we let
$$E := \cup_{\sigma \subset K} f_\bullet(\sigma) \subset \vert K \vert \times \bR^\infty,$$
which has a canonical structure of a block bundle over $\vert K \vert$. For uniqueness, we use the relative version of simplicial approximation and $\vert K \times \Delta^1 \vert \approx \vert K \vert \times [0,1]$ to produce a concordance of block bundles.

In the converse direction, let $(p: E \to \vert K \vert, \mathcal{A})$ be a block bundle over a finite simplicial complex. By induction on the simplices of $K$, using the Whitney embedding theorem we may find a topological embedding $e : E \to \vert K \vert \times \bR^N$ for some $N \gg 0$ which on each block chart gives a smooth embedding $e \circ h_\sigma^{-1} : \sigma \times M \hookrightarrow \sigma \times \bR^N$. We then choose an ordering of the vertices of $K$ to obtain $K_\bullet$, and define a semi-simplicial map $K_\bullet \to \mathcal{M}(M;\bR^N)_\bullet \subset \mathcal{M}(M)_\bullet$ sending a simplex $\sigma : \Delta^p \hookrightarrow \vert K \vert$ to the manifold $\sigma^*e(p^{-1}(\sigma(\Delta^p))) \subset \Delta^p \times \bR^N$. A relative version of the Whitney embedding theorem shows that the homotopy class obtained is independent of the choice of embedding $e$, and furthermore depends only on the concordance class of $(p: E \to \vert K \vert, \mathcal{A})$.
\end{proof}

\begin{proposition}\label{block-bundles:quasifibration}
A block bundle $p:E \to |K|$ is a ``weak quasifibration'' in the sense that for each vertex $v \in K$, the comparison map $p^{-1} (v) \to \hofib_p (v)$ is a weak homotopy equivalence.
\end{proposition}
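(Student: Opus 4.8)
The plan is to prove the apparently stronger statement that for \emph{every} subcomplex $A \subseteq K$ the square
$$
\begin{CD}
p^{-1}(\vert A\vert) @>>> E\\
@VVV @VVV\\
\vert A\vert @>>> \vert K\vert
\end{CD}
$$
is homotopy cartesian; the proposition is the case $A=\{v\}$, where the upper left corner is $p^{-1}(v)$ and the homotopy pullback is $\hofib_p(v)$. One first reduces to $K$ finite: any class in a homotopy group of these spaces is supported on a finite subcomplex, and $\hofib_p(v)$ is the filtered colimit, along cofibrations, of the homotopy fibres of the restrictions of $p$ to finite subcomplexes containing $v$ (a path has compact image). So assume $K$ finite and induct on the number of its simplices.

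The base case is a block bundle $q\colon E' \to \bar\sigma$ over a single closed simplex. A block chart over $\bar\sigma$ identifies $E' \cong \bar\sigma \times M$ so that $q^{-1}(\bar\tau) = \bar\tau\times M$ for every face $\tau$; in particular $q^{-1}(\bar\tau) \hookrightarrow E'$ is a homotopy equivalence. The comparison map $\alpha\colon q^{-1}(\bar\tau) \to E' \times^h_{\bar\sigma} \bar\tau$ followed by the projection $\beta \colon E' \times^h_{\bar\sigma} \bar\tau \to E'$ is exactly this inclusion, hence a weak equivalence; and $\beta$ is the homotopy base change along $q$ of $\bar\tau \hookrightarrow \bar\sigma$, which is a weak equivalence (both spaces are contractible), so $\beta$ is a weak equivalence too. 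By two-out-of-three $\alpha$ is a weak equivalence, i.e.\ $q$ is homotopy cartesian over each closed face. Since for any subcomplex $A$ of $\sigma$ one has $\vert A\vert = \hocolim_{\tau \in A} \bar\tau$ and $q^{-1}(\vert A\vert) = \hocolim_{\tau\in A} q^{-1}(\bar\tau)$, and homotopy colimits in spaces are universal, the natural map $q^{-1}(\vert A\vert) \to E'\times^h_{\bar\sigma}\vert A\vert$ is the homotopy colimit of the weak equivalences $q^{-1}(\bar\tau) \to E'\times^h_{\bar\sigma}\bar\tau$, hence a weak equivalence.

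For the inductive step, if $A \neq K$ choose a simplex $\sigma$ maximal in $K$ with $\sigma \notin A$, so $A \subseteq K' := K\setminus\{\sigma\}$, a subcomplex. Using block charts as above, $\vert K\vert = \vert K'\vert \cup_{\partial\sigma}\bar\sigma$ and $E = p^{-1}(\vert K'\vert)\cup_{p^{-1}(\partial\sigma)}p^{-1}(\bar\sigma)$ are homotopy pushouts (the gluing maps are cofibrations). Factoring $\vert A\vert \to \vert K'\vert \to \vert K\vert$, the square over $\vert A\vert \to \vert K'\vert$ is homotopy cartesian by the inductive hypothesis for $p^{-1}(\vert K'\vert)\to\vert K'\vert$. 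For the square over $\vert K'\vert \to \vert K\vert$ I would apply the cube theorem for homotopy (co)cartesian squares (a form of descent): in the cube formed by the two homotopy pushouts and $p$, the vertical face over $\partial\sigma \to \vert K'\vert$ is homotopy cartesian by the inductive hypothesis (for $K'$ and the subcomplex $\partial\sigma$), and the vertical face over $\partial\sigma\to\bar\sigma$ is homotopy cartesian by the previous paragraph (for the block bundle $p^{-1}(\bar\sigma)\to\bar\sigma$ and the subcomplex $\partial\sigma$); hence the vertical face over $\vert K'\vert \to \vert K\vert$ is homotopy cartesian. Pasting the two squares completes the induction, and the case $A=\{v\}$ gives the proposition.

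The step I expect to be the main obstacle is the base case over a simplex --- or rather, seeing how to do it at all. Block bundles are genuinely not quasifibrations: over a single $1$-simplex the structure map may be any continuous map restricting to homeomorphisms over the two vertices, so $q^{-1}$ of an interior point can have the wrong homotopy type, and one cannot check "homotopy cartesian over $\bar\tau$" fibrewise. The device above avoids fibres entirely, comparing total spaces through $E'$ by two-out-of-three, and works precisely because the block chart exhibits $q^{-1}(\bar\tau)\hookrightarrow E'$ as a homotopy equivalence. Everything else --- the reduction to finite complexes, the homotopy colimit presentations of $\vert A\vert$ and $p^{-1}(\vert A\vert)$, the cube theorem, and the cofibrancy of the gluing maps --- is routine.
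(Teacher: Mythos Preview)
Your argument is correct and takes a genuinely different route from the paper's. The paper avoids the failure of $p$ to be a quasifibration by composing with McCord's quotient map $f:\vert K\vert \to X_K$ (collapsing each open simplex to a point) and verifying that $f\circ p$ satisfies the hypotheses of the Dold--Thom gluing theorem; the main technical step there is to extend block charts to a block-smooth trivialisation $p^{-1}(\star(\sigma)) \cong \star(\sigma)\times M$ over closed stars, which requires an inductive argument using the Kan property of $\blockdiff(M)_\bullet$. You instead upgrade the statement to ``homotopy cartesian over every subcomplex inclusion'' and run an induction on simplices using Mather's cube theorem (descent for homotopy pushouts in spaces), with the key local input being the two-out-of-three trick over a single simplex: since $\bar\tau$ and $\bar\sigma$ are both contractible, the projection $E'\times^h_{\bar\sigma}\bar\tau \to E'$ is an equivalence, and so is $q^{-1}(\bar\tau)\hookrightarrow E'$ by the block chart, forcing the comparison map to be one too. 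Your approach sidesteps both the McCord construction and the extension-over-stars argument, and yields the stronger conclusion; the paper's approach is more classical and makes no appeal to the cube theorem or to universality of homotopy colimits in spaces. The one place to be slightly careful in your write-up is to note that only one leg of each pushout needs to be a cofibration (it is $p^{-1}(\partial\sigma)\hookrightarrow p^{-1}(\bar\sigma)$, via the block chart), and that the diagrams $\tau\mapsto \bar\tau$ and $\tau\mapsto q^{-1}(\bar\tau)$ over the face poset are Reedy cofibrant, so their ordinary colimits really are homotopy colimits.
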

\begin{proof}
Let $p: E \to |K|$ be a block bundle and recall that $|K|$ is (the geometric realisation of) a simplicial complex $K$. We wish to apply the gluing theorem for quasifibrations by Dold and Thom, \cite[Satz 2.2]{DT}, but without further modification, it cannot be applied since it is not true (and not claimed) that all point-preimages of $p$ have the same weak homotopy type, only those over vertices of $|K|$.

To get around this problem, we use a seemingly arcane construction due to McCord \cite{McCord}. Namely, let $X_K$ be the quotient of $|K|$ that is obtained by collapsing all \emph{open} simplices to points ($X_K$ has one point for each simplex of $K$ and is of course not a Hausdorff space). Let $f:|K| \to X_K$ be the quotient map. McCord proved that $f$ is a weak homotopy equivalence, by showing that $f$ satisfies the assumptions of the gluing theorem for quasifibrations (and the point-preimages are open simplices, hence contractible). Let $p: E \to |K|$ be the block bundle under consideration. We will prove that the composition $f \circ p: E \to X_K$ is a quasifibration in the sense of Dold--Thom. Once this is done, the argument is finished as follows: if $v \in K$ is a vertex, then the diagram
\begin{equation*}
\begin{gathered}
\xymatrix{
p^{-1} (v) \ar[d] \ar[r] & \hofib_{p} (v) \ar[d] \\
(f \circ p)^{-1} (f(v)) \ar[r] & \hofib_{f\circ p} (f(v))
}
\end{gathered}
\end{equation*}
commutes. The left vertical map is a homeomorphism; the bottom map is a weak equivalence because $f \circ p$ is a quasifibration and the right vertical is a weak equivalence by McCord's theorem. So the top map is a weak homotopy equivalence, as asserted.

For a simplex $\sigma$ of $K$, we denote the open star by $\opstar (\sigma)\subset |K|$. Observe that $\opstar (\sigma) \cap \opstar (\tau)$ is nonempty iff $\sigma \cup \tau$ (as a set of vertices of $K$) is a simplex, in which case $\opstar (\sigma) \cap \opstar (\tau)= \opstar(\sigma \cup \tau))$.
The images $U_{\sigma}:=f(\opstar (\sigma))$ form an open covering of the quotient $X_K$, and this cover is closed under taking finite intersections. Now we claim that the sets $U_{\sigma}$ are distinguished (``ausgezeichnet'') in the sense of Dold--Thom. As the set $U_{\sigma}$ is contractible, this amounts to showing that for each $x \in U_{\sigma}$, the inclusion $(f \circ p)^{-1} (x) \to (f \circ p)^{-1} (U_{\sigma})$ is a weak homotopy equivalence. Let $\tau$ be the closure of $f^{-1}(x)$; this is a simplex of $K$, lying in the star of $\sigma$ and not in the link. So what we have to prove is that for each $\tau \subset \star (\sigma)$, $\tau \not \subset \Link_K (\sigma)$, the inclusion $p^{-1} (\inter \tau) \to p^{-1}(\opstar \sigma)$ is a weak equivalence. 

By the definition of a block bundle, for every simplex $\tau$ there is a homeomorphism $p^{-1}(\tau ) \cong \tau \times M$ restricting to a similar homeomorphism on each face of $\tau$. Thus, for each vertex $v$ of $\tau$, the inclusions $p^{-1}(v)\to p^{-1}(\tau) \leftarrow p^{-1}(\inter \tau)$ are homotopy equivalences. If we can show that for each vertex $w \in \sigma$, the inclusions  $p^{-1}(w)\to p^{-1}(\star (\sigma)) \leftarrow p^{-1}(\opstar(\sigma))$ are homotopy equivalences, then we can pick a common vertex $v$ of $\sigma$ and $\tau$ and observe that the diagram
$$
 \xymatrix{
 p^{-1}(\inter (\tau)) \ar[r] \ar[d] & p^{-1} (\opstar(\sigma)) \ar[d] \\
 p^{-1} (\tau) \ar[r] & p^{-1} (\star(\sigma))\\
  p^{-1} (v)\ar[r]^{\id} \ar[u] & p^{-1} (v) \ar[u]
 }$$
commutes and all vertical maps are weak equivalences, which finishes the argument.

For a subcomplex $X \subset K$ say a homeomorphism $p^{-1}(X) \cong X \times M$ is \emph{block smooth} if for each simplex $\tau \subset X$ it restricts to a homeomorphism $p^{-1}(\tau) \cong \tau \times M$, and this homeomorphism lies in the block atlas $\mathcal{A}$. Let us write $\Link(\sigma)$ for the link of the simplex $\sigma$, and recall that $\star(\sigma) = \sigma * \Link(\sigma)$. We claim that we may find a block smooth homeomorphism  $p^{-1} (\star(\sigma)) \cong \star (\sigma) \times M$, which will finish the proof of the proposition as then the maps we are trying to show are equivalences may be identified with
$$\{w\} \times M \lra \star(\sigma) \times M \longleftarrow \opstar(\sigma) \times M,$$
which are clearly homotopy equivalences. In order to do so, we choose indiscriminately a block diffeomorphism $h_\tau: p^{-1}(\tau) \cong \tau \times M$ for each simplex $\tau \subset \star(\sigma)$. The block diffeomorphism $h_\sigma$ provides a block smooth homeomorphism over $\sigma = \sigma * \emptyset$, and we extend this to a block smooth homeomorphism over $\star(\sigma) = \sigma * \Link(\sigma)$ by induction over simplices of $\Link(\sigma)$.

For a simplex $\rho \subset \Link(\sigma)$ suppose we have a block smooth homeomorphism $\phi_{\sigma * \partial \rho} : p^{-1}(\sigma * \partial \rho) \to (\sigma * \partial \rho) \times M$. Then it does not necessarily agree with the restriction of the block chart $h_{\sigma * \rho}$, but differs from it by a block diffeomorphism of $(\sigma * \partial \rho) \times M$. The semisimplicial set $\widetilde{\Diff}_\bullet(M)$ is easily seen to be Kan, so this block diffeomorphism may be extended to a block diffeomorphism $\varphi$ of $(\sigma * \rho) \times M$, and then $\phi_{\sigma * \partial \rho}$ agrees with $\varphi \circ h_{\sigma * \rho}\vert_{\sigma * \partial \rho}$. Thus we may extend $\phi_{\sigma * \partial \rho}$ to a block smooth homeomorphism over $\sigma * \rho$.
\end{proof}

\section{Block bundles have MMM-classes}\label{blockbundles-mmm}

In order to show that block bundles admit generalised MMM-classes, we will prove more specifically that to a block bundle $p:E \to \vert K \vert$ over a finite simplicial complex we can associate the following structures, naturally in the block bundle:

\begin{enumerate}[(i)]
\item A Leray--Serre spectral sequence.
\item A transfer map $\trf_p^*:H^*(E) \to H^*(\vert K \vert)$ of Becker--Gottlieb type (not at all to be confused with the Gysin map).
\item A \emph{stable} vertical tangent bundle $T_v^s E \to E$.
\end{enumerate}
If the block bundle is an actual fibre bundle, then all these structures will reduce to those coming from the smooth bundle structure. Once this work is done, Theorem \ref{thm:A} in the case where the base is a finite simplicial complex is proved by the following line of argument.

\begin{proof}[Proof of Theorem \ref{thm:A} for finite simplicial complexes]
First note that the Gysin map $p_!$ can be defined in terms of the Leray--Serre spectral sequence, see \cite[\S 8]{BH}, as long as the fibres are compatibly oriented. Recall that if $\cha(\bF) \neq 2$ we let $c$ be a monomial in $\bF[p_1, p_2, \ldots]$ if $d$ is odd, and a monomial in $\bF[p_1, p_2, \ldots] \langle 1, e \rangle$ if $d$ is even, and if $\cha(\bF)=2$ then we let $c$ be a monomial in $\bF[w_1, w_2, \ldots]$. We aim to define $\tilde{\kappa}_c(p, \mathcal{A})$.

If $\cha(\bF)=2$ or if $d$ is odd (so $e=0$), the monomial is expressed in terms of just Pontrjagin classes and Stiefel--Whitney classes. These are \emph{stable}, in the sense that they only depend of the stable isomorphism class of a vector bundle, so we may define
$$\tilde{\kappa}_c(p, \mathcal{A}) := p_!(c(T_v^s E)).$$

The non-stable characteristic classes with field coefficients appear for $\cha(\bF) \neq 2$ and $d$ even, and are those of the form $e \cdot q (p_1,\ldots,p_n)$ for some polynomial $q$ in the Pontrjagin classes. For a smooth bundle $\pi:E \to B$ and a class $x \in H^* (E)$, the identity $\trf_{\pi}^{*} (x)=\pi_! (e (T_v E) \cdot x)$ holds (see \cite{BG75}): for block bundles, we use this formula as a \emph{definition}: 
$$\tilde{\kappa}_{e \cdot q(p_1, \ldots, p_n)}(p, \mathcal{A}) :=\trf_{p}^{*} (q (p_1(T_v^s E),\ldots,p_n(T_v^s E))).$$

As we have discussed, on smooth fibre bundles these definitions recover the usual $\kappa_c$, and they are also natural because the structures i) -- iii) used in their definition are natural. This finishes the proof of Theorem \ref{thm:A}.
\end{proof}

We still have to construct the three structures listed above. The key to the transfer and the Leray--Serre spectral sequence is Proposition \ref{block-bundles:quasifibration}, and does not require that $K$ is finite. If $(p : E \to \vert K \vert, \mathcal{A})$ is an block bundle with fibre over a vertex $v$ the oriented $d$-manifold $M$, the inclusion
$$M = p^{-1}(v) \lra \hofib_p (v)$$
is a weak homotopy equivalence by Proposition \ref{block-bundles:quasifibration}. Thus the Leray--Serre spectral sequence for the replacement $p^f : E^f \to \vert K \vert$ of $p$ by a fibration has the form
$$H^s(\vert K \vert ; \mathcal{H}^t(M)) \cong H^s(\vert K \vert ; \mathcal{H}^t(\hofib_p (v))) \Longrightarrow H^{s+t}(E^f) \cong H^{s+t}(E)$$
and is the desired spectral sequence. If $(p, \mathcal{A})$ is an oriented block bundle, then the local system $\mathcal{H}^d(\hofib_p (v))$ is trivialised, which allows us to define the pushforward using this spectral sequence.

A construction of the transfer that is sufficiently general for our purposes was given by Casson and Gottlieb \cite{CG}. The ``transfer theorem'' stated in the introduction of loc.\ cit.\ states that if $f:X \to Y$ is a Hurewicz fibration over a CW complex base whose fibres are homotopy equivalent to a finite CW complex then there is a transfer map $\trf_f^* : H^* (X) \to H^* (Y)$. We would like to apply this to $p : E \to \vert K \vert$, but this is not a fibration: if we na{\"i}vely replace it by one, we cannot expect its fibre to have the homotopy type of a CW complex. We instead prove  a general lemma showing that maps may be weakly replaced by Hurewicz fibrations with CW complex fibres.

\begin{lemma}\label{lemma:hurewicz-replacement}
Let $f_0:E_0 \to B_0$ be a map of spaces. Then there exists a Hurewicz fibration $f_3:E_3 \to B_3$ over a CW complex base which is weakly equivalent (via a zig-zag of maps) to $f_0$, such that the fibres of $f_3$ have the homotopy type of a CW complex.
\end{lemma}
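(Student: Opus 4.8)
The goal is to replace an arbitrary map $f_0 : E_0 \to B_0$ by a Hurewicz fibration $f_3 : E_3 \to B_3$, weakly equivalent through a zig-zag, with both CW base and CW-homotopy-type fibres. I would do this in three successive moves, matching the subscripts. \emph{Step 1 (CW-ify the base).} Choose a CW approximation $g_1 : B_1 \to B_0$ (e.g.\ the geometric realisation of the singular simplicial set, or a CW replacement) and form the pullback $f_1 : E_1 := E_0 \times_{B_0} B_1 \to B_1$. Since $g_1$ is a weak equivalence, so is the map $E_1 \to E_0$: this requires a small argument, because $f_0$ is not assumed to be a fibration, so one cannot invoke the long exact sequence of homotopy groups naively; instead use that $E_1 \to E_0$ is the pullback of a weak equivalence along $f_0$ and that weak equivalences are preserved under pullback along \emph{Serre} fibrations after first replacing $f_0$ by a Serre fibration — or, more cleanly, first do Step 2 and then Step 1. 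So I would reorder: first convert $f_0$ to a Hurewicz fibration $f_2 : E_2 \to B_0$ over the \emph{same} base via the standard path-space construction $E_2 = \{(e,\gamma) : \gamma(0) = f_0(e)\}$, with $E_0 \hookrightarrow E_2$ a homotopy equivalence and $f_2$ a Hurewicz fibration. \emph{Step 2 (CW-ify the base, now that $f_2$ is a fibration).} Take a CW approximation $g : B_3 \to B_0$ and pull back: $f_3' : E_3' := E_2 \times_{B_0} B_3 \to B_3$. Pullback of a Hurewicz fibration is a Hurewicz fibration, and pullback of a weak equivalence along a Hurewicz fibration is a weak equivalence, so $E_3' \to E_2$ is a weak equivalence and $E_3' \leftarrow E_0$ (composing with the homotopy equivalence $E_0 \hookrightarrow E_2$) gives the zig-zag $E_0 \to E_2 \leftarrow E_3'$ and compatibly on the base $B_0 = B_0 \leftarrow B_3$.

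\emph{Step 3 (CW-ify the fibres).} At this point $f_3' : E_3' \to B_3$ is a Hurewicz fibration over a CW base, but its fibre $F$ is only weakly equivalent to a CW complex, not of CW homotopy type. Apply a fibrewise CW approximation: let $F' \to F$ be a CW approximation of the fibre, and then — using that $B_3$ is a CW complex and $f_3'$ a Hurewicz fibration — build a new fibration $f_3 : E_3 \to B_3$ with fibre $F'$ and a map $E_3 \to E_3'$ over $B_3$ restricting to the CW approximation on each fibre. Concretely this can be done by the fibrewise singular-complex / realisation construction: replace $f_3'$ by the fibration $|\mathrm{Sing}_\bullet^{B_3}(E_3')| \to B_3$ built from the fibrewise singular simplicial set (taking singular simplices of $E_3'$ over each singular simplex of $B_3$), whose total space is a CW complex and whose fibres have the homotopy type of $|\mathrm{Sing}_\bullet F| $, a CW complex; the comparison map to $E_3'$ is a weak equivalence on total spaces because it is so on each fibre and both maps to $B_3$ are fibrations (apply the five lemma to the long exact sequences, or quote that a fibrewise weak equivalence of Hurewicz fibrations over a CW base is a weak equivalence). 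Alternatively one can invoke the model-categorical statement directly: in the category of spaces over $B_3$ with the Strøm (or mixed) model structure, take a cofibrant-fibrant-ish replacement; but for a self-contained note the explicit fibrewise-singular-complex construction is cleanest.

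\emph{The main obstacle.} The delicate point is Step 3: producing a Hurewicz (not merely Serre) fibration whose fibre is literally a chosen CW complex, together with a fibrewise weak equivalence to $E_3'$, and verifying that this induces a weak equivalence on total spaces. The subtlety is that CW approximation of a single space does not automatically globalise over the base while keeping the \emph{Hurewicz} lifting property; one must either use the fibrewise realisation construction and check its good properties (it is a Hurewicz fibration because geometric realisation of a simplicial space satisfying the appropriate conditions is, and its fibres are computed levelwise), or phrase everything in terms of the mixed model structure on $\mathrm{Top}$, in which the ``CW-homotopy-type'' objects are exactly the fibrant ones and Hurewicz fibrations between them behave well. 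Everything else — the path-space replacement, pullback stability of fibrations and of weak equivalences along fibrations, and CW approximation of the base — is standard and I would only indicate it briefly.
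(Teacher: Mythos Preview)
Your strategy is reasonable but diverges from the paper at exactly the point you identify as delicate, and the paper's route avoids that difficulty altogether. The paper replaces \emph{both} $E_0$ and $B_0$ by the geometric realisations of their singular complexes, obtaining a map $f_1 : E_1 \to B_1$ between CW complexes; it then homotopes $f_1$ to a cellular map and replaces it by the inclusion $f_2 : E_2 \hookrightarrow B_2$ into its mapping cylinder (again a CW complex); finally it takes the standard path-space fibration replacement $f_3$ of $f_2$. The fibres of $f_3$ are then the homotopy fibres of a cellular inclusion of CW complexes, and Milnor's theorem \cite[Thm.\ 3]{Miln} says directly that these have the homotopy type of a CW complex. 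No fibrewise construction is needed.

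By contrast, you CW-approximate only the base and then attempt to repair the fibres afterwards. Your Step~3---the ``fibrewise singular simplicial set'' and its realisation---is not a standard construction, and verifying that it yields a genuine Hurewicz fibration with the claimed fibre would be real work (as you acknowledge). The paper's trick of CW-approximating the total space as well buys you Milnor's theorem for free and makes the whole argument a few lines long. Your approach could likely be pushed through via the mixed model structure you mention, but the paper's is both shorter and more elementary.
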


\begin{proof}
We construct a commuative diagram
\begin{equation*}
\begin{gathered}
\xymatrix{
E_3 \ar[d]^{f_3} & \ar[l] E_2  \ar[d]^{f_2} \ar@{=}[r] & E_1 \ar[d]^{f_1} \ar[r] & E_0 \ar[d]^{f_0}\\
B_3\ar@{=}[r] & B_2 \ar[r]  & B_1  \ar[r] & B_0, 
}
\end{gathered}
\end{equation*}
such that all horizontal maps are weak homotopy equivalences and such that $f_3$ is a Hurewicz fibration with the desired property.
By taking the geometric realisations of the singular complexes of $B_0$ and $E_0$, we can replace $f_0$ by a cellular map $f_1: E_1 \to B_1$ of CW-complexes. The mapping cylinder of a cellular map is again a CW-complex, and so by replacing $f_1$ by the inclusion into its mapping cylinder, we turn $f_1$ into a cellular inclusion $f_2: E_2 \to B_2$. The homotopy fibre (using the standard construction, i.e. that in \cite[p. 59]{May}) of a cellular inclusion has the homotopy type of a CW complex by \cite[Thm. 3]{Miln}. Thus the fibration replacement $f_3: E_3 \to B_3$ of $f_2$ has the desired properties. 
\end{proof}

The construction of the transfer is deduced from this lemma as follows. Apply the lemma to $p : E \to \vert K \vert$ to obtain a weakly equivalent Hurewicz fibration $q : X \to Y$ over a CW complex base with CW complex fibres. The fibre of $q$ is weakly equivalent to the homotopy fibre of $p$, and so, by Proposition \ref{block-bundles:quasifibration}, to $M$: these both have the homotopy types of CW complexes, so they are actually homotopy equivalent, so $q$ has fibres homotopy equivalent to a \emph{finite} CW complex. We apply the ``transfer theorem'' of \cite{CG} to find a map
$$\trf_q^* : H^*(X) \lra H^*(Y),$$
which passing through the zig-zags of weak equivalences relating $q$ and $p$ gives the desired map
$$\trf_p^* : H^*(E) \lra H^*(\vert K \vert).$$

Finally, we have to produce a stable analogue of the vertical tangent bundle. If $\pi: E \to B$ is a smooth fibre bundle over a compact smooth manifold base, there exists an embedding $i : E \to B \times \bR^N$ over $B$ for some $N \gg 0$. Then we have bundle isomorphisms
$$TE \cong \pi^*TB \oplus T_v E \quad\quad TE \oplus \nu_i \cong \pi^*TB \oplus \epsilon^N \quad \quad \nu_i \oplus T_v E \cong \epsilon^N$$
where $\nu_i$ is the normal bundle of the embedding $i$. The last of these is the most convenient description to use to construct the stable vertical tangent bundle.

\begin{proposition}\label{existence:normal-bundle}
A smooth block bundle $(p : E \to \vert K \vert, \mathcal{A})$ over a finite simplicial complex has a natural stable bundle $T^s_vE \to E$, which when the block bundle arises from a smooth fibre bundle agrees with the vertical tangent bundle.
\end{proposition}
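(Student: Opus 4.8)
The plan is to imitate the third of the three isomorphisms recorded just above the statement, $\nu_i\oplus T_v E\cong\epsilon^N$, using a block embedding in place of the fibrewise embedding $i$. Exactly as in the proof of Proposition \ref{prop:classification}, an induction over the simplices of $K$ using the Whitney embedding theorem produces, for some $N\gg 0$, a topological embedding $e:E\hookrightarrow\vert K\vert\times\bR^N$ over $\vert K\vert$ whose restriction to each block chart $h_\sigma\in\cA$ is a smooth embedding $e\circ h_\sigma^{-1}:\sigma\times M\hookrightarrow\sigma\times\bR^N$. For each simplex $\sigma$ this exhibits $p^{-1}(\sigma)$ as a smooth submanifold (with corners, transverse to the faces) $W_\sigma\subset\sigma\times\bR^N$; let $\nu_\sigma\to p^{-1}(\sigma)$ denote the normal bundle of $W_\sigma$, a vector bundle of rank $N-\dim M$. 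I will glue the $\nu_\sigma$ to a vector bundle $\nu_e\to E$ and then set $T^s_vE:=[\epsilon^N]-[\nu_e]$, of virtual rank $\dim M$.

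The crux is the gluing. Over $p^{-1}(\sigma)\cap p^{-1}(\tau)=p^{-1}(\sigma\cap\tau)$ the two block charts identify this subset of $E$ with the \emph{same} submanifold of $(\sigma\cap\tau)\times\bR^N$, namely $W_\sigma\cap((\sigma\cap\tau)\times\bR^N)=e(p^{-1}(\sigma\cap\tau))=W_\tau\cap((\sigma\cap\tau)\times\bR^N)$, carrying the \emph{same} smooth structure, because $e\circ h_\tau^{-1}$ and $e\circ h_\sigma^{-1}$ differ by precomposition with the transition map $h_\tau\circ h_\sigma^{-1}$, which is a block diffeomorphism and so in particular a diffeomorphism of $(\sigma\cap\tau)\times M$. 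Since the normal bundle of a smooth submanifold depends only on the submanifold, not on a parametrisation, $\nu_\sigma$ and $\nu_\tau$ are canonically identified over $p^{-1}(\sigma\cap\tau)$; these identifications plainly satisfy the cocycle condition, so the $\nu_\sigma$ glue to $\nu_e\to E$. This is exactly the point at which the defining feature of block bundles --- that block charts and their transitions need not respect the projection to $\vert K\vert$ --- does no harm: a block diffeomorphism is still a diffeomorphism, hence preserves the normal bundle of $W_\sigma$, even though its derivative does \emph{not} preserve the subbundle ``$0\oplus TM$'' of $T((\sigma\cap\tau)\times M)$. It is precisely this last remark that forces us to pass to the stable, rather than the actual, vertical tangent bundle.

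Independence of the auxiliary data follows from the uniqueness clause of the relative Whitney embedding theorem already used in Proposition \ref{prop:classification}: enlarging $N$ replaces $\nu_e$ by $\nu_e\oplus\epsilon^1$, leaving $[\epsilon^{N+1}]-[\nu_e\oplus\epsilon^1]=[\epsilon^N]-[\nu_e]$ unchanged, and any two block embeddings into a common $\vert K\vert\times\bR^N$ become isotopic through block embeddings after a further stabilisation; such an isotopy, viewed as a block embedding of the block bundle $E\times[0,1]\to\vert K\vert\times[0,1]$, furnishes by the same construction a bundle over $E\times[0,1]$ restricting to the two $\nu_e$'s at the ends, hence an isomorphism between them. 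Thus $T^s_vE$ is well-defined as a vector bundle up to stabilisation, i.e.\ as a class in $\widetilde{\KO}^0(E)$ --- which is all that is needed, since in Section~\ref{blockbundles-mmm} only its stable characteristic classes are used. For naturality under a simplicial map $f:L\to K$, take the block embedding of $q:f^*E\to\vert L\vert$ to be $(\mathrm{pr}_{\vert L\vert},\,\mathrm{pr}_{\bR^N}\circ e\circ\mathrm{pr}_E):f^*E=E\times_{\vert K\vert}\vert L\vert\hookrightarrow\vert L\vert\times\bR^N$; by construction its normal bundles over the simplices of $L$ are pulled back from those of $e$ along $f^*E\to E$, so $T^s_v(f^*E)=(f^*E\to E)^*T^s_vE$.

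Finally, if $p$ arises from a smooth fibre bundle $\pi$, then $\pi$ admits a fibrewise embedding $i:E\hookrightarrow\vert K\vert\times\bR^N$ over $\vert K\vert$ (a standard application of the Whitney embedding theorem and partitions of unity subordinate to the open stars), which is in particular a block embedding for the induced block structure; in this case each $\nu_\sigma$ is the fibrewise normal bundle of $i(\pi^{-1}(\sigma))$ in $\sigma\times\bR^N$, these glue to the fibrewise normal bundle $\nu^{fw}$, and $T_v E\oplus\nu^{fw}\cong\epsilon^N$ honestly, whence $[T^s_vE]=[\epsilon^N]-[\nu^{fw}]=[T_v E]$ in $\widetilde{\KO}^0(E)$. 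The only genuine obstacle is the gluing claim of the second paragraph, together with the bookkeeping needed to see that it yields only a \emph{stable} bundle; everything else is routine.
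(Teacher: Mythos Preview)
Your approach is the paper's in outline --- construct a block embedding $e$ and take the stable complement of its normal bundle --- but the gluing step has a real gap. You assert that $\nu_\sigma$ and $\nu_\tau$ are ``canonically identified over $p^{-1}(\sigma\cap\tau)$'' because ``the normal bundle of a smooth submanifold depends only on the submanifold, not on a parametrisation.'' That principle is not what is at stake: $\nu_\sigma\vert_{p^{-1}(\sigma\cap\tau)}$ is the restriction to $W_{\sigma\cap\tau}$ of the normal bundle of $W_\sigma$ inside $\sigma\times\bR^N$, while $\nu_\tau\vert_{p^{-1}(\sigma\cap\tau)}$ is the restriction of the normal of $W_\tau$ inside the \emph{different} ambient manifold $\tau\times\bR^N$. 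Neither is \emph{a priori} the normal bundle of $W_{\sigma\cap\tau}$ in $(\sigma\cap\tau)\times\bR^N$, so independence-of-parametrisation says nothing. What you actually need is that $W_\sigma$ meets each face $(\sigma\cap\tau)\times\bR^N$ transversally: then the inclusion-induced map $T((\sigma\cap\tau)\times\bR^N)/TW_{\sigma\cap\tau}\to T(\sigma\times\bR^N)/TW_\sigma\vert_{W_{\sigma\cap\tau}}$ is an isomorphism, giving $\nu_{\sigma\cap\tau}\cong\nu_\sigma\vert_{W_{\sigma\cap\tau}}$ and hence the identification you want; the cocycle condition follows because all these maps come from the same system of inclusions. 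Transversality is part of Definition~\ref{def-blockbundlespace}, but it is not automatic from a bare inductive Whitney embedding and must be arranged with collars.

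The paper handles exactly this issue by two devices you omit: it also fixes an affine embedding $a:\vert K\vert\hookrightarrow\bR^k$, and it insists on a \emph{good} embedding, meaning $Tp^{-1}(\sigma)\vert_{p^{-1}(\tau)}=Tp^{-1}(\tau)\oplus\mu_\tau^\sigma$ for every face $\tau\subset\sigma$. Under this stronger hypothesis the stabilised tangent spaces $T_x(p^{-1}(\sigma))\oplus\mu_\sigma$ are literal $(d+k)$-planes in the fixed $\bR^{N+k}$ that agree on the nose across faces, so one gets a continuous Gauss map $E\to\Gr_{d+k}(\bR^{N+k})$ with no gluing isomorphisms to chase. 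A smaller slip: you cannot in general take $e$ to be \emph{over} $\vert K\vert$, since $p\circ h_\sigma^{-1}:\sigma\times M\to\sigma$ is only continuous (block charts are homeomorphisms) and so cannot be the first coordinate of a smooth embedding; the paper explicitly does not require this. You only use $e(p^{-1}(\sigma))\subset\sigma\times\bR^N$, which is what Proposition~\ref{prop:classification} actually gives; dropping ``over $\vert K\vert$'' only affects your naturality formula, which is easily repaired by comparing both candidate bundles on $f^*E$ via a concordance.
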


\begin{proof}
Let $(p : E \to \vert K \vert, \mathcal{A})$ be such a block bundle. We denote $E_{L}:=p^{-1} (\vert L \vert)$ for each subcomplex $L \subset K$. The proof begins with the construction of a suitable embedding.
As $K$ has finitely many simplices, we may find an embedding $e: E \hookrightarrow \vert K \vert \times \bR^N$ for large enough $N$, by induction on simplices, which is a smooth embedding on each block chart. We do not require that it is an embedding over $\vert K \vert$, but require that $e (p^{-1}(\sigma)) \subset \sigma \times \bR^N$ for each simplex $\sigma$.
Fix moreover an embedding $a:\vert K \vert \to \bR^k$ that is affine on each simplex.
We obtain an embedding of $E$ into $\bR^k \times \bR^N$. For each simplex $\sigma \subset K$ the linear embedding $\sigma \subset \vert K \vert \subset \bR^k$ induces a metric on $T\sigma$. We denote by $\mu_{\sigma}$ the normal bundle of $\sigma$ inside $\bR^k$, and by $\mu_{\tau}^{\sigma}$ the normal bundle of $\tau$ inside $\sigma$, when $\tau \subset \sigma$ is a subsimplex (these normal bundles are defined by taking orthogonal complements). We have a submanifold $p^{-1}(\sigma) \subset \sigma \times \bR^N$ of dimension $\dim (\sigma)+d$. We need to choose the embedding more carefully, though. Namely, we require that whenever $\tau \subset \sigma$ is a face, then the vector bundles $T p^{-1}(\sigma)|_{p^{-1}(\tau)}$ and $T p^{-1} (\tau) \oplus \mu_{\tau}^{\sigma}$ should agree. (As $Tp^{-1}(\tau)$ and $\mu_\tau^\sigma$ are transverse and the sum of their dimensions is the dimension of $Tp^{-1}(\sigma)\vert_{p^{-1}(\tau)}$, this condition is equivalent to asking $\mu_\tau^\sigma$ to lie in $Tp^{-1}(\sigma)\vert_{p^{-1}(\tau)}$.) This can be achieved by induction over skeleta: if $E_{K^{n}} \to \vert K \vert \times \bR^N$ is already constructed, we embed an open neighborhood of $E_{K^{n}} \subset E_{K^{n+1}}$ into $\vert K \vert \times \bR^{N+1}$ so that the desired property holds on this neighborhood and appeal to the relative version of the Whitney embedding theorem. We shall call such an embedding \emph{good} and fix a good embedding.

We have a submanifold $p^{-1}(\sigma) \subset \sigma \times \bR^N$ of dimension $d+ \dim(\sigma)$ which at each point $x \in p^{-1}(\sigma)$ has a $(\dim(\sigma)+d)$-dimensional tangent subspace $T_x(p^{-1}(\sigma)) \subset T_x(\sigma \times \bR^N)$. By taking $T_x(p^{-1}(\sigma)) \oplus \mu_{\sigma}$, we obtain a $(d +k)$-dimensional vector space, hence a point in $\Gr_{d+k}(\bR^{k+N})$. The resulting map $t_{\sigma,e,a}: p^{-1} (\sigma) \to \Gr_{d+k}(\bR^{k+N})$ is continuous; and by the property of a good embedding, we have that $t_{p^{-1}(\sigma),a,e}|_{p^{-1}(\tau)} = t_{p^{-1}(\tau),a,e}$, so these glue together to a continuous map $t_{E,a,e}:E \to \Gr_{d+k}(\bR^{N+k})$. The bundle $t^{*}_{E,a,e}\gamma_{k+d}$ on $E$ is a $k+d$-dimensional subbundle of $E \times \bR^{N+k}$, which we call the \emph{stable vertical tangent bundle} of $E$ with respect to the embeddings $e$ and $a$. We allow ourselves to denote the bundle by the same symbol, namely $t_{E,a,e}$. In the same way, we can, for $x \in p^{-1}(\sigma)$, take $n_{\sigma,a,e} (x)$ to be the orthogonal complement of $t_{\sigma} (x)$ in $\bR^{N+k}$, this is, in each block, the $(N-d)$-dimensional normal bundle of $p^{-1}(\sigma) \subset \sigma \times \bR^N$. The resulting $(N-d)$-dimensional vector bundle on $E$ will be denoted $n_{E,a,e}$. By construction, it is clear that the following properties hold.

\begin{enumerate}[(i)]
\item If $L \subset K$ is a subcomplex, then $n_{E,a,e}|_{E_L}=n_{E_L,a|_L,e|_{E_L}}$.
\item If $i:\bR^k \to \bR^{k+l}$ is an affine isometric embedding, then the normal bundle is unchanged, and to the stable vertical tangent bundle a trivial bundle is added.
\item If $j: \bR^N \to \bR^{N+l}$ is an affine isometric embedding, the stable vertical tangent bundle is unchanged, while to the normal bundle a trivial bundle is added.
\item $n_{E,a,e} \oplus t_{E,a,e} = \epsilon^{N+k}$.
\end{enumerate}

Now we claim that, for fixed $a$, the stable isomorphism class of $n_{E,a,e}$ does not depend on $e$. Let $e_0$ and $e_1$ be two good embeddings. By the third property, we can assume that the dimension of the target is the same for both embeddings. Choose a concordance $(p' : E' \to \vert K \vert \times [0,1], \mathcal{A}')$ from $(p : E \to \vert K \vert, \mathcal{A})$ to itself (cf.\ paragraph after Definition \ref{defn:concordance}). The embeddings $e_0$ and $e_1$ together give a good embedding $e:E'\vert_{\vert K \vert \times \{0,1\}} = E \times \{0,1\} \to (\vert K \vert \times \{0,1\}) \times \bR^N$ and, if $N$ is large enough, we can extend this to a good embedding $e:E' \to \vert K \vert \times I \times \bR^N$. Take the affine embedding $a \times \id_I: \vert K \vert \times I \to \bR^{k+1}$. By the first and second property, we find that $n_{E',a\times \id_I,e}|_{E \times \{i\}} \cong n_{E,a,e_i}$, for $i=0,1$, and so by homotopy invariance of vector bundles this proves the claim. There is no need for us to prove the independence of $a$, as there is a canonical affine embedding: suppose that $K$ has $k$ vertices, and take the canonical embedding $\vert K \vert \to \bR^k$. Now we define the stable vertical tangent bundle of $E$ to be 
$$T_{v}^{s} E := t_{E,a,e} - \epsilon^k$$
for some good embedding. We have proved that its stable isomorphism class does not depend on the choices we made.

Suppose that the block bundle arises from a smooth fibre bundle $\pi : E \to \vert K \vert$, with vertical tangent bundle $T_v E \to E$. We fix the canonical affine embedding from before and can pick a good embedding, and this time there is no problem to define the embedding to be over $\vert K \vert$. On each block chart, the map $\pi: E_{\sigma} \to \sigma$ is a submersion, and the kernel of its differential is equal to $T_v E|_{E_{\sigma}}$. Taking direct sum with the identity on $\mu_{\sigma}$, we obtain a bundle epimorphism 
$$\eta_{\sigma}:t_{\sigma,a,e}  \lra T \sigma \oplus \mu_{\sigma} = \bR^k$$
and if $\tau \subset \sigma$ is a face, then $\eta_{\sigma}|_{\tau} = \eta_{\tau}$, because we chose the embedding to be good. Altogether, we obtain a short exact sequence
$$0 \lra T_v E \lra t_{E,a,e} \lra \epsilon^k \lra 0,$$
which proves that the stable vector bundles $T_v E$ and $T_{v}^{s} E$ are isomorphic.
\end{proof}

This finishes the construction of the data required to define MMM-classes for smooth block bundles over finite simplicial complexes. The following lemma is not necessary for the construction, but will be useful in Section \ref{sec:counterexample}.

\begin{lemma}\label{lem:BlockBundleNormalManifold}
Let $p : E \to B$ be a smooth map between smooth manifolds, $\phi : \vert K \vert \to B$ be a Whitehead triangulation, and $\mathcal{A}$ be a block bundle structure on $\phi^*p : \phi^*E \to \vert K \vert$ with fibre $M$. Then under the homeomorphism $\hat{\phi} : \phi^*E \approx E$ induced by $\phi$ there is a stable isomorphism $T_v^s E \cong_s \hat{\phi}^*(TE - p^*TB)$.
\end{lemma}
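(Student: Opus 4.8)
The plan is to revisit the construction of $T_v^s E$ from Proposition \ref{existence:normal-bundle}, but this time to make the auxiliary embeddings compatible with the ambient smooth manifolds $E$ and $B$, so that the gluing of the ``vertical tangent + normal'' data over simplices literally reassembles the genuine bundle $TE - p^* TB$. First I would fix an affine-on-simplices embedding $a : \vert K \vert \to \bR^k$; since $\phi$ is a Whitehead triangulation we may (after refining the triangulation) assume that the smooth structure on $B$ is compatible with $K$ in the sense that $\phi$ is a smooth embedding on each simplex, and then combine $a$ with a smooth embedding of $B$ to realise $\vert K \vert \hookrightarrow B \hookrightarrow \bR^{k'}$. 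Likewise, because $E$ is a smooth manifold and $p$ is smooth, I would choose the ``good'' embedding $e : \phi^* E \hookrightarrow \vert K \vert \times \bR^N$ of Proposition \ref{existence:normal-bundle} so that it factors through a smooth embedding of $E$ into $B \times \bR^N$ over $B$; the relative Whitney embedding theorem lets one arrange this while retaining the goodness condition (that $\mu_\tau^\sigma \subset T p^{-1}(\sigma)\vert_{p^{-1}(\tau)}$ for faces $\tau \subset \sigma$), since both conditions are open and dense.

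With such compatible embeddings in hand, the key step is to identify, simplex by simplex, the map $t_{\sigma,a,e} : p^{-1}(\sigma) \to \Gr_{d+k}(\bR^{k+N})$ classifying the stable vertical tangent bundle. Over $p^{-1}(\sigma)$ the subspace $T_x(p^{-1}(\sigma)) \oplus \mu_\sigma$ decomposes as the kernel of $dp$ restricted to $p^{-1}(\sigma)$, plus the tangent directions of $\sigma$, plus $\mu_\sigma$; but $T\sigma \oplus \mu_\sigma$ is exactly the (trivialised) tangent space of $\bR^k$, i.e.\ $p^* TB$ plus the normal directions of $B$ in $\bR^{k'}$ — and along a single simplex, since $\phi$ is affine there, the tangent space of $\vert K \vert$ agrees with a constant subspace. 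Tracking this through the gluing exactly as in the proof of Proposition \ref{existence:normal-bundle}, the short exact sequence that there read $0 \to T_v E \to t_{E,a,e} \to \epsilon^k \to 0$ for honest fibre bundles is here replaced by
$$0 \lra \hat\phi^*(TE - p^*TB + \text{(normal bundle of }B \subset \bR^{k'}\text{)}) \lra t_{E,a,e} \lra \epsilon^k \lra 0,$$
and since the normal bundle of $B$ inside Euclidean space is stably trivial (as $TB$ is, being the tangent bundle of a manifold embedded in $\bR^{k'}$, wait — rather, $TB \oplus \nu_B = \epsilon^{k'}$), subtracting $\epsilon^k$ from $t_{E,a,e}$ and using $\nu_B \cong_s -TB$ gives $T_v^s E = t_{E,a,e} - \epsilon^k \cong_s \hat\phi^*(TE - p^*TB)$ after the same cancellation, stably. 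The independence of all choices has already been established in Proposition \ref{existence:normal-bundle}, so it suffices to exhibit one such compatible pair $(a,e)$.

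The main obstacle I expect is the first step: arranging a single embedding $e$ that is simultaneously (a) ``good'' in the technical sense required to make $t_{E,a,e}$ glue, and (b) an embedding of $E$ over $B$ compatible with a fixed smooth embedding of $B$ — one must check these are not in tension, which amounts to noting that the goodness condition only constrains $e$ in a neighbourhood of the lower skeleta and can be imposed by a $C^\infty$-small perturbation that does not destroy being an embedding over $B$ (here one uses that $\phi$ is a Whitehead, hence smooth on simplices, triangulation, so that $\mu_\tau^\sigma$ makes sense inside $TB$ restricted to the simplices). A secondary bookkeeping point is to be careful that ``$TE - p^*TB$'' as a virtual bundle on $E$ is compared with the virtual bundle $t_{E,a,e} - \epsilon^k$ only after adding the stably-trivial normal bundle of $B$, so the identification is genuinely a stable one; but this is exactly the kind of cancellation already performed at the end of Proposition \ref{existence:normal-bundle}, so no new idea is needed beyond carefully propagating the ambient manifold structure through that argument.
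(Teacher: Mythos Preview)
Your setup is the same as the paper's: pick the good embedding $e$ so that it comes from a smooth embedding $E \hookrightarrow B \times \bR^N$ over $B$. That part is fine, and is exactly what the paper does.

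The gap is in your identification step. You write that over $p^{-1}(\sigma)$ the space $T_x(p^{-1}(\sigma))$ ``decomposes as the kernel of $dp$ restricted to $p^{-1}(\sigma)$, plus the tangent directions of $\sigma$''. This is the assertion that $p\vert_{p^{-1}(\sigma)} : p^{-1}(\sigma) \to \sigma$ is a submersion, and it is \emph{not} part of the hypothesis---indeed, the entire point of a block bundle is that the map to the base need not be a submersion even blockwise. (Concretely: $p(t)=t^2 : [0,1]\to[0,1]$ with the identity as block chart is a block bundle with point fibre, and $dp$ vanishes at $0$.) The short exact sequence at the end of Proposition \ref{existence:normal-bundle} was available only because one was then dealing with an honest fibre bundle; you cannot simply transplant that argument. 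Your replacement sequence also has a virtual bundle as its kernel, and the subsequent cancellation you sketch (confusing $T\sigma$ with $TB\vert_\sigma$ and juggling two different base embeddings $a$ and $B\hookrightarrow\bR^{k'}$) does not straighten out.

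The paper avoids $dp$ entirely. Having chosen $e : E \hookrightarrow B \times \bR^N$ over $B$, one has $TE \oplus \nu(e) \cong p^*TB \oplus \epsilon^N$ automatically, with no submersion hypothesis. It then suffices to show that $\hat\phi^*\nu(e)$ and $t_{\phi^*E,a,\phi^*e}$ are transverse subbundles of $\epsilon^{N+k}$ (they have complementary rank, so transversality gives a direct sum decomposition). This is where goodness is actually used: it forces $\nu(e)\vert_{E_\sigma}$ to coincide with the blockwise normal bundle $n_\sigma$ of $E_\sigma \subset \sigma \times \bR^N$, which is by construction the orthogonal complement of $t_\sigma$. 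That is the missing idea.
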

\begin{proof}
Choose a smooth embedding $e : E \to B \times \bR^N$ over $B$, which has a normal bundle $\nu(e)$, and note that there is an isomorphism $TE \oplus \nu(e) \cong p^*TB \oplus \epsilon^N$. Let us write $\phi^*e : \phi^*E \hookrightarrow \vert K \vert \times \bR^N$ for the induced embedding. We can pick $e$ so that $\phi^* e$ is a good embedding (in the sense of the proof of Proposition \ref{existence:normal-bundle}).
What we have to show is that $\hat{\phi}^{*}\nu(e) \cap t_{\phi^* E,a,\phi^* e} =0$ as subbundles of $\epsilon^{N+k}$; as they are subbundles of  complementary dimension, it will follow that $\hat{\phi}^{*}\nu(e) \oplus t_{\phi^* E,a,\phi^* e} \cong \epsilon^{N+k}$, which gives the required stable isomorphism. Since the embedding is good, the restriction of $\nu(e)$ to $E_{\sigma}$ is the same as the normal bundle of $E_{\sigma}$ inside $\sigma \times \bR^N$, which proves that the two subbundles are transverse as required.
\end{proof}

We will now explain how to extend the definition of MMM-classes to arbitrary base simplicial complexes, by defining universal MMM-classes. 

\begin{theorem}\label{thm:Aprime}
For any $d$-manifold $M$ and any monomial $c$ as in Theorem \ref{thm:A} there is a class $\tilde{\kappa}_c \in H^*(B\blockdiff(M);\bF)$ satisfying
\begin{enumerate}[(i)]
\item for a map $f: \vert K \vert \to B\blockdiff(M)$ from a finite simplicial complex classifying a block bundle $(p : E \to \vert K \vert, \mathcal{A})$, we have $f^*\tilde{\kappa}_c = \tilde{\kappa}_c(p, \mathcal{A})$,
\item under the natural map $B\Diff(M) \to B\blockdiff(M)$ the class $\tilde{\kappa}_c$ pulls back to $\kappa_c$. 
\end{enumerate}
\end{theorem}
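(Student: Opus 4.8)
The plan is to glue together, over the universal example, the classes $\tilde{\kappa}_c(p,\mathcal{A})$ that Section \ref{blockbundles-mmm} already produces for block bundles over finite simplicial complexes. Write $Y := B\blockdiff(M) = \vert\mathcal{M}(M)_\bullet\vert$, which by Proposition \ref{prop:classification} is the natural classifying space for block bundles with fibre $M$; it is a CW complex, hence the filtered union of its finite subcomplexes $\{Y_\alpha\}$. The one formal input I would use is that, because $\bF$ is a field, $\bF$-linear dualisation is exact, so $H^*(Y;\bF)$ is the $\bF$-linear dual of $H_*(Y;\bF)$; since $H_*(-;\bF)$ commutes with filtered colimits of spaces, $H_*(Y;\bF)=\colim_\alpha H_*(Y_\alpha;\bF)$, and dualising turns this colimit into the limit
$$H^*(Y;\bF)\;\cong\;\lim_\alpha H^*(Y_\alpha;\bF),$$
with no $\lim^1$-term. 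It therefore suffices to produce classes $\tilde{\kappa}_c^{(\alpha)}\in H^*(Y_\alpha;\bF)$ compatible under the inclusions $Y_\alpha\subseteq Y_\beta$.

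First I would spell out a property of the finite-complex construction that is implicit but not stated above: for a finite simplicial complex $K$, the class $\tilde{\kappa}_c(p,\mathcal{A})\in H^*(\vert K\vert;\bF)$ depends only on the concordance class of $(p,\mathcal{A})$ (Definition \ref{defn:concordance}). This is a formal consequence of Theorem \ref{thm:A}(i): a concordance is a block bundle over a triangulation $L$ of $\vert K\vert\times[0,1]$, whose class restricts along the two simplicial inclusions $K\hookrightarrow L$ to the classes of the two ends, and the two induced maps $H^*(\vert L\vert;\bF)\to H^*(\vert K\vert;\bF)$ coincide because $\vert L\vert\simeq\vert K\vert$. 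Combined with Proposition \ref{prop:classification} and simplicial approximation, this shows that for \emph{every} finite simplicial complex $K$ and \emph{every} continuous map $f\colon\vert K\vert\to Y$ there is a well-defined class $\tilde{\kappa}_c(f)\in H^*(\vert K\vert;\bF)$ — namely $\tilde{\kappa}_c$ of any block bundle classified by $f$ — and $\tilde{\kappa}_c(f)$ is natural in the pair $(K,f)$ for arbitrary continuous maps (naturality for simplicial maps being upgraded to all maps exactly via the concordance-invariance just established).

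Now each finite subcomplex $Y_\alpha\subseteq Y=\vert\mathcal{M}(M)_\bullet\vert$ is a finite $\Delta$-complex, hence triangulable, so $\tilde{\kappa}_c(\iota_\alpha)\in H^*(Y_\alpha;\bF)$ is defined for the inclusion $\iota_\alpha\colon Y_\alpha\hookrightarrow Y$ by the previous paragraph (and is independent of the triangulation by the naturality there); these $\tilde{\kappa}_c(\iota_\alpha)$ are compatible as $\alpha$ varies, and hence assemble to a class $\tilde{\kappa}_c\in H^*(B\blockdiff(M);\bF)$. Property (i) is then immediate: if $f\colon\vert K\vert\to B\blockdiff(M)$ classifies $(p,\mathcal{A})$ then, $\vert K\vert$ being compact, $f$ factors through some $Y_\alpha$, so $f^*\tilde{\kappa}_c=f^*\tilde{\kappa}_c^{(\alpha)}=\tilde{\kappa}_c(f)=\tilde{\kappa}_c(p,\mathcal{A})$ by naturality and the definition of ``classified by $f$''.

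For property (ii) I would run the same bookkeeping on $B\Diff(M)$: replacing it by a CW model gives $H^*(B\Diff(M);\bF)=\lim_\beta H^*(C_\beta;\bF)$ over its finite subcomplexes, over each of which the universal bundle restricts to a smooth fibre bundle $\pi_\beta$ with $\kappa_c\vert_{C_\beta}=\kappa_c(\pi_\beta)$, while the composite $C_\beta\to B\Diff(M)\xrightarrow{g}B\blockdiff(M)$ classifies the underlying block bundle of $\pi_\beta$. Since the Leray--Serre spectral sequence, the Becker--Gottlieb transfer and the stable vertical tangent bundle used to define $\tilde{\kappa}_c$ all reduce to their smooth counterparts when the block bundle arises from a smooth bundle (Proposition \ref{existence:normal-bundle} together with the corresponding remarks in Section \ref{blockbundles-mmm}), one gets $(g^*\tilde{\kappa}_c)\vert_{C_\beta}=\tilde{\kappa}_c(\text{underlying block bundle of }\pi_\beta)=\kappa_c(\pi_\beta)=\kappa_c\vert_{C_\beta}$ for all $\beta$, whence $g^*\tilde{\kappa}_c=\kappa_c$ after passing to the limit. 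I expect the only genuinely non-formal points to be the reconciliation of the semi-simplicial set $\mathcal{M}(M)_\bullet$ with the simplicial-complex-based Definition \ref{defn:block-bundle} (handled by subdivision) and the concordance-invariance in paragraph two; it is the latter that makes ``the block bundle classified by $f$'' unambiguous, and note that without the field hypothesis the gluing in paragraph one would only pin $\tilde{\kappa}_c$ down modulo a $\lim^1$-term.
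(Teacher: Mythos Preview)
Your proof is correct and follows essentially the same route as the paper: define the class as an element of $\lim H^*(-;\bF)$ over finite pieces of the classifying space, using concordance invariance for compatibility and the vanishing of $\lim^1$ to identify the limit with $H^*(B\blockdiff(M);\bF)$. The only cosmetic differences are that the paper first replaces $\vert\mathcal{M}(M)_\bullet\vert$ by an honest simplicial complex via Rourke--Sanderson's second derived subdivision (so the finite pieces are already simplicial complexes rather than $\Delta$-complexes requiring triangulation), and it kills $\lim^1$ by Mittag--Leffler for finite-dimensional vector spaces rather than by dualising homology; you also spell out concordance invariance and property (ii) more explicitly than the paper does.
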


\begin{proof}

We have given a homotopy equivalence $B\blockdiff(M) := \vert N_\bullet \blockdiff(M)_\bullet \vert \simeq \vert\mathcal{M}(M)_\bullet \vert$. By an observation of Rourke and Sanderson \cite[p.\ 327]{RSI}, the second derived subdivision of a semi-simplicial set has the structure of a simplicial complex, so there is a homotopy equivalence $\vert\mathcal{M}(M)_\bullet \vert \simeq \vert L \vert_w$ for some (infinite) simplicial complex $L$, where $\vert - \vert_w$ denotes the geometric realisation with the weak topology. 

For each finite sub-simplicial complex $K \subset L$ the map $\vert K \vert \to \vert L \vert_w \simeq \vert\mathcal{M}(M)_\bullet \vert$ classifies a block bundle $(p_K : E_K \to \vert K \vert, \mathcal{A}_K)$, unique up to concordance, and we have defined $\tilde{\kappa}_c(p_K, \mathcal{A}_K) \in H^*(\vert K\vert ;\bF)$. If $K' \subset K$ is a subcomplex, then $(p_{K'} : E_{K'} \to \vert K' \vert, \mathcal{A}_{K'})$ is concordant to $(p_K\vert_{K'} : E_K\vert_{K'} \to \vert K' \vert, \mathcal{A}_K\vert_{K'})$, as both are classified by the same homotopy class of map to $\vert\mathcal{M}(M)_\bullet \vert$, and so $\tilde{\kappa}_c(p_{K'}, \mathcal{A}_{K'})$ is equal to the restriction of $\tilde{\kappa}_c(p_K, \mathcal{A}_K)$. Thus we obtain a class
$$\tilde{\kappa}_c \in \lim_{K \subset L}H^*(\vert K\vert ;\bF) \cong H^*(\cup_{K \subset L} \vert K \vert ;\bF) \cong H^*(\vert L \vert_w ;\bF) \cong H^*(B\blockdiff(M);\bF).$$
The first isomorphism holds as for each $i$ the inverse system $\{H^i(\vert K\vert ;\bF)\}_{K \subset L}$ consists of finite-dimensional vector spaces, so is Mittag-Leffler and has no $\lim^1$. The second isomorphism holds as $\vert L \vert_w$ has the weak topology so is the colimit of its finite subcomplexes. This class enjoys the properties claimed.
\end{proof}

\section{Topological bundles have MMM-classes}\label{topbundles}

Let $\pi: E \to B$ be an oriented bundle of closed oriented $d$-dimensional topological manifolds over a compact topological manifold base. The data $\ft_v E:= (E \overset{\Delta}\to E \times_B E \overset{\pi_1}\to E)$ describes an oriented $d$-dimensional topological microbundle \cite{MilnMicro} over $E$, and as $E$ is a manifold bundle over a compact base it is again (para)compact and so the microbundle $\ft_v E$ is representable by an oriented $\bR^d$-bundle $T_v E$, by the Kister--Mazur theorem \cite[Theorem 2]{Kister}.

Now, an oriented $\bR^d$-bundle $V \to X$ has an Euler class and Stiefel--Whitney classes, but also has \emph{rational} Pontrjagin classes.
Euler and Stiefel--Whitney classes are invariants of the underlying spherical fibration $V \setminus 0 \to X$: for example, the total Stiefel--Whitney class is defined as $\thom^{-1} (\Sq \thom (1)) \in H^{*}(X; \bF_2)$, where $\thom: H^* (X;\bF_2) \overset{\sim}\to H^{*+d}(V,V\setminus 0;\bF_2)$ is the Thom isomorphism. The existence of rational Pontrjagin classes for $\bR^d$-bundles is much deeper and goes back to Novikov's theorem on topological invariance of rational Pontrjagin classes, \cite{Nov}. One way to view these rational Pontrjagin classes is the fact that $TOP/O$ has finite homotopy groups \cite[Ess.\ V Thm.\ 5.5]{KS}, so $BO \to BTOP$ is a rational homotopy equivalence.

\begin{remark}
If $V \to X$ is an oriented vector bundle of rank $d$, then $p_m =0$ if $4m > 2d$ and $p_m =e^2$ if $d=2m$. The question of whether these identities hold for the rational Pontrjagin classes of topological $\bR^d$-bundles is a difficult open problem, cf.\ \cite{ReisWeiss}.
\end{remark}

Thus we may define, for $c \in \bF[e,p_1,p_2,\ldots ]$ (if $\cha(\bF) = 0$, and ignoring $e$ if $d$ is odd) or $c \in \bF [w_1,w_2,\ldots,w_d]$ (if $\cha(\bF) = 2$) the class
$$\kappa^{TOP}_c(\pi) := \pi_!(c(T_v E)) \in H^*(B;\bF).$$
The classes so defined are clearly natural under pull-back, and agree with the $\kappa_c$ for smooth bundles. This provides the construction for Theorem \ref{thm:B} as long as the bundle in question has a compact topological manifold base. 

For a closed oriented topological manifold $M$, we let $\Homeo^+(M)$ denote the group of orientation-preserving homeomorphisms of $M$, in the compact-open topology, and let $B\Homeo^+(M)$ be its classifying space. It carries a universal fibre bundle
$$\pi: E := E\Homeo^+(M) \times_{\Homeo^+(M)} M \lra B\Homeo^+(M).$$
Theorem \ref{thm:B} is immediate from the following proposition.

\begin{proposition}\label{prop4.2}
Let $\cha(\bF)=0$ or $2$. There exist unique classes $\kappa_c^{TOP} \in H^*(B\Homeo^+(M);\bF)$ which pull-back to the classes $\kappa_c^{TOP}(\pi)$ for every oriented bundle $\pi : E \to B$ with fibre $M$ over a compact manifold.
\end{proposition}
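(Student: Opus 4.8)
The plan is to establish Proposition \ref{prop4.2} by the standard device of approximating $B\Homeo^+(M)$, which is generally an infinite-dimensional space with no manifold structure, by compact manifolds mapping into it, and then patching together the classes $\kappa_c^{TOP}(\pi)$ already constructed in the compact-manifold-base case. Concretely, I would first observe that $B\Homeo^+(M)$ has the homotopy type of a CW complex (since $\Homeo^+(M)$ with the compact-open topology is metrisable and has the homotopy type of a CW complex, or alternatively by replacing it by $|\mathrm{Sing}_\bullet B\Homeo^+(M)|$), so it suffices to define a compatible family of classes on its finite skeleta, or more conveniently on a filtration by compact subspaces. The universal bundle $\pi$ restricts over any compact subspace $X \subset B\Homeo^+(M)$, but $X$ need not be a manifold, so the discussion above does not immediately apply; the remedy is that any map $g : X \to B\Homeo^+(M)$ from a finite complex factors, up to homotopy, through a map from a compact smooth manifold (e.g.\ a regular neighbourhood of an embedding of $X$ into some $\bR^n$, which deformation retracts onto $X$), over which $\kappa_c^{TOP}$ is defined.

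The key steps, in order, are as follows. First, for each finite CW subcomplex $K \subset B\Homeo^+(M)$, choose a compact smooth manifold $N_K$ (with boundary) and a homotopy equivalence $r_K : N_K \to K$ — for instance, embed $K$ in Euclidean space and take a closed regular neighbourhood. The composite $N_K \to K \hookrightarrow B\Homeo^+(M)$ classifies a fibre bundle over the compact manifold $N_K$, to which we may apply the already-established construction, obtaining $\kappa_c^{TOP}(\pi_{N_K}) \in H^*(N_K;\bF) \cong H^*(K;\bF)$. Second, check that this class is independent of the auxiliary choice of $N_K$ and $r_K$: given two such, one interpolates by a compact manifold with boundary (a regular neighbourhood of $K \times [0,1]$, or the double mapping cylinder), applies naturality of $\kappa_c^{TOP}$ under restriction to compact submanifolds, and uses that the two restrictions represent the same cohomology class of $K$. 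Third, check compatibility: if $K' \subset K$, then naturality of the $\kappa_c^{TOP}(\pi)$ under pull-back along an inclusion $N_{K'} \hookrightarrow N_K$ of compact manifolds (which can be arranged) shows the class on $K$ restricts to the class on $K'$. Thus the classes assemble to an element of $\lim_{K} H^*(K;\bF) \cong H^*(B\Homeo^+(M);\bF)$, where — exactly as in the proof of Theorem \ref{thm:Aprime} — the $\lim^1$ term vanishes since each $H^i(K;\bF)$ is finite-dimensional (this uses that $M$, hence the relevant cohomology, is of finite type, or alternatively one works skeleton-by-skeleton so the inverse system is even eventually constant in each degree). Finally, for any oriented $M$-bundle $\pi : E \to B$ over a compact manifold $B$ classified by $g : B \to B\Homeo^+(M)$, factor $g$ through a finite subcomplex $K$ and compare: $g^*\kappa_c^{TOP}$ agrees with $\kappa_c^{TOP}(\pi)$ by naturality, since $B$ is itself a compact manifold and hence can play the role of $N_K$. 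Uniqueness is immediate, since any class with the stated pull-back property is determined on each finite subcomplex by pulling back along a compact-manifold approximation.

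The main obstacle is the independence-of-choices step: one must verify that the class $\kappa_c^{TOP}(\pi_{N_K})$ transported to $H^*(K;\bF)$ via $r_K^*$ does not depend on the regular neighbourhood chosen, and this requires knowing that $\kappa_c^{TOP}$ is natural not merely for pull-backs along maps of compact manifolds but in a way compatible with the homotopy equivalences $r_K$. This is not quite formal because the construction of $\kappa_c^{TOP}$ in the compact case went through the Kister--Mazur theorem and fibre integration, which a priori are only manifestly natural for bundle maps covering genuine maps of spaces; however, since fibre integration and characteristic classes of the (rationally-defined, via Novikov) vertical tangent microbundle are homotopy-invariant in the base, this compatibility does hold, and the cleanest way to package it is to note that all the data entering $\kappa_c^{TOP}(\pi)$ — the microbundle $\ft_v E$, its Thom class, the Gysin map — depend only on the underlying fibration up to fibre homotopy equivalence over the base, and hence are preserved under pulling back along $r_K$. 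One could alternatively sidestep the issue entirely by working with the singular simplicial set: replace $B\Homeo^+(M)$ by a CW complex, filter by finite subcomplexes $K$, and for each simplex use that $\pi$ is trivial over it; but turning this into a manifold-base statement still ultimately requires the regular-neighbourhood argument, so I expect this verification to be where the real (if modest) work lies.
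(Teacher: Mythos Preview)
Your approach is correct but takes a genuinely different route from the paper's. The paper argues via bordism: for each closed oriented $(k-d)$-manifold $B$ mapping to $B\Homeo^+(M)$, the number $\int_B \kappa_c^{TOP}(\pi)$ is a bordism invariant, giving a linear map $\Omega^{SO}_{k-d}(B\Homeo^+(M)) \to \bF$; one checks it annihilates products with positive-dimensional manifolds, so it descends to $\Omega^{SO}_*(B\Homeo^+(M)) \otimes_{\Omega^{SO}_*(*)} \bF \cong H_*(B\Homeo^+(M);\bF)$ and hence represents the desired cohomology class. For $\cha(\bF)=2$ the same argument is run with unoriented bordism. This is where the paper actually \emph{uses} the hypothesis $\cha(\bF)\in\{0,2\}$: those are precisely the characteristics for which $\Omega_*(-)\otimes_{\Omega_*(*)}\bF \cong H_*(-;\bF)$ (oriented or unoriented as appropriate).

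Your regular-neighbourhood-plus-inverse-limit argument is more elementary in that it avoids bordism entirely, and in fact never invokes the characteristic hypothesis --- once $\kappa_c^{TOP}(\pi)$ is defined and natural over compact manifold bases, your argument goes through for any coefficients. The paper's approach is slicker and shorter, and has the pleasant feature of working only with closed manifolds (your regular neighbourhoods have boundary, so you are implicitly using that the compact-base construction and its naturality extend to manifolds with boundary, which is true but not quite what was stated). Your identification of the ``main obstacle'' is apt but perhaps overstated: naturality of $\kappa_c^{TOP}$ under pull-back along \emph{any} map of compact manifold bases follows immediately from naturality of the vertical microbundle and of the Gysin map, and this is all you need --- there is no separate homotopy-invariance issue to address. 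One small looseness: when you say ``$B$ can play the role of $N_K$'', the map $B\to K$ is not a homotopy equivalence, so what you really do is compare $B$ and $N_K$ through a map $B\to N_K$ lifting $B\to K$ up to homotopy; this works by the same naturality.
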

\begin{proof}
Consider first the case $\cha(\bF)=0$.
Let $c$ have degree $k$, so $\kappa_c^{TOP}$ should have degree $(k-d)$. Let $f : B^{k-d} \to B\Homeo^+(M)$ be a continuous map from an $(k-d)$-dimensional smooth oriented manifold. This classifies a fibre bundle $\pi : E \to B$ over a compact manifold base, and we may extract a rational number $\int_B \kappa_c^{TOP}(\pi)$. The usual argument shows that this number is invariant if we change the map $f$ by a cobordism, so we obtain a linear map
$$\int_{-} \kappa_c^{TOP}: \Omega_{k-d}^{SO}(B\Homeo^+(M)) \lra \bF$$
from the oriented bordism of $B\Homeo^+(M)$.

Furthermore, if $g: B^{k-d-\ell} \to B\Homeo^+(M)$ is a continuous map classifying a bundle $\pi : E \to B$ and $N^\ell$ is a $\ell$-dimensional manifold, then the bundle $\mathrm{Id}_N \times \pi: N \times E \to N \times B$ is pulled back from the projection to $B$, so
$$\int_{N \times B} \kappa_c^{TOP}(\mathrm{Id}_N \times \pi) = \begin{cases}
0 & \ell > 0\\
[N]\cdot \int_B \kappa_c^{TOP}(\pi) & \ell=0.
\end{cases}$$
Thus $\int_{-} \kappa_c^{TOP}$ descends to a map
$$\int_{-} \kappa_c^{TOP} : \Omega^{SO}_*(B\Homeo^+(M))\otimes_{\Omega^{SO}_*(*)} \bF \cong H_*(B\Homeo^+(M);\bF) \lra \bF[k-d],$$
so represents a class ${\kappa}_c^{TOP} \in H^{k-d}(B\Homeo^+(M);\bF)$. For an oriented bundle $\pi : E \to B$ classified by a map $f : B \to B\Homeo^+(M)$ we have $f^*\kappa_c^{TOP} = \kappa_c^{TOP}(\pi)$, as both classes give the same function $\Omega_{k-d}^{SO}(B) \to \bF$.

The case $\cha(\bF)=2$ is the same, but replacing oriented bordism $\Omega^{SO}_*(-)$ by unoriented bordism $\Omega_*(-)$, and using the fact that $\Omega_*(-) \otimes_{\Omega_*(*)} \bF \cong H_*(-;\bF)$.
\end{proof}

\section{Proof of Theorem \ref{thm:counterexample}}\label{sec:counterexample}

The techniques used in this proof we inspired by \cite[\S 5]{HSS}. We will aim to find a homotopy equivalence $f : E \overset{\sim}\to S^{12} \times \bH \bP^2$ with 
\begin{enumerate}[(i)]
\item $p_1(TE) = f^*p_1(S^{12} \times \bH \bP^2)$,

\item $p_2(TE) = f^*p_2(S^{12} \times \bH \bP^2)$,

\item  but $p_5(TE) \neq 0$.
\end{enumerate}
Supposing we have done so, we try to give $p:=\pi_1 \circ f : E \to S^{12}$ the structure of a smooth block bundle, using the work of Casson \cite{Casson}. In particular his Theorem 1 applies to $p$, and gives a single obstruction which in our case may be described as follows. We may homotope $f$ to be smooth and transverse to $\{b\} \times \bH \bP^2$ for some $b \in S^{12}$, giving a pull-back square
\begin{equation*}
\xymatrix{
F^8 \ar[r]^-g \ar@{^(->}[d]& \{b\} \times \bH \bP^2 \ar@{^(->}[d]\\
E^{20}  \ar[r]^-f_-\simeq & S^{12} \times \bH \bP^2.
}
\end{equation*}
The map $g$ is a degree one normal map, as $f$ is and the two vertical maps are embeddings with trivialised normal bundle. Casson's obstruction is then the surgery obstruction for $g$, i.e.\ $\tfrac{1}{8}(\sign(F) - \sign(\bH\bP^2))$. As the vertical embeddings are normally framed, we may compute
$$\sign(F) = \langle \cl_2(TF), [F]\rangle = \langle \cl_2(TE), [F]\rangle = \langle f^*\cl_2(T(S^{12} \times \bH \bP^2)), [F]\rangle = \sign(\bH \bP^2),$$
so Casson's obstruction vanishes and $p$ is homotopic to a ``prefibration". By \cite[Lemma 6]{Casson} any prefibration is equivalent to a smooth block bundle, $p: E \to \vert K \vert \cong S^{12}$. By Lemma \ref{lem:BlockBundleNormalManifold} we have $T_v^s E \simeq_s TE - p^*TS^{12}$, and so $p_5(T_v^s E) = p_5(TE) \neq 0$. Thus $\int_{S^{12}} \tilde{\kappa}_{p_5} = \int_E p_5(TE) \neq 0$, which finishes the proof of Theorem \ref{thm:counterexample}.

It remains to produce the homotopy equivalence $f : E \overset{\sim}\to S^{12} \times \bH \bP^2$ with the properties claimed above. We do so by surgery theory, using a result which is neatly packaged in \cite[Theorem 6.5]{Davis}. Namely, if we write $x \in H^{12}(S^{12};\bQ)$ and $y \in H^{4}(\bH \bP^2;\bQ)$ for generators, then by the cited theorem there exists a manifold $E$ and homotopy equivalence $f$ such that
$$\cl(TE) = f^*\big(\cl(T(S^{12} \times \bH \bP^2)) + R \cdot x \cdot y\big)$$
for some non-zero integer $R$. As $x \cdot y$ has degree 16, the first two of the desired properties hold. To establish the last desired property, we simply compute with the Hirzebruch $\cl$-polynomials \cite[p.\ 12]{Hir}. First note that
$$p(T(S^{12} \times \bH \bP^2)) = 1 + 2y + 7 y^2.$$
Now, using
$$\cl_4 = \tfrac{1}{3^4 \cdot 5^2 \cdot 7}\big(381 p_4 - 71 p_2 p_1 - 19 p_2^2 + 22 p_2 p_1^2 - 3 p_1^4\big)$$
and noting that $TE$ and $T(S^{12} \times \bH \bP^2)$ have the same Pontrjagin classes below the fourth, we obtain
$$\tfrac{381}{3^4 \cdot 5^2 \cdot 7}\cdot p_4(TE)  = f^*\left(\tfrac{381}{3^4 \cdot 5^2 \cdot 7} \cdot p_4(T(S^{12} \times \bH \bP^2)) + R \cdot x \cdot y\right)$$
and so
$$p_4(TE) = f^*\big(\tfrac{3^4 \cdot 5^2 \cdot 7}{381} \cdot R \cdot x \cdot y \big).$$
Secondly, using
$$\cl_5 = \tfrac{1}{3^5 \cdot 5^2 \cdot 7\cdot 11}\big(5110 p_5 -919 p_4 p_1 - 336 p_3 p_2 + 237 p_3 p_1^2 + 127 p_2^2 p_1 - 83p_2 p_1^3 + 10 p_1^5\big)$$
and the fact that $TE$ and $T(S^{12} \times \bH \bP^2)$ have the same Pontrjagin classes below the fourth, we obtain
$$5110 \cdot(p_5(TE) - f^*p_5(T(S^{12} \times \bH \bP^2))) = 919 \cdot(p_4(TE) - f^*p_4(T(S^{12} \times \bH \bP^2))) \cdot p_1(TE)$$
and so
$$p_5(TE) = \tfrac{919}{5110} \cdot f^*\big(\tfrac{3^4 \cdot 5^2 \cdot 7}{381} \cdot R \cdot x \cdot y\big) \cdot f^*\big(2 \cdot y\big) = f^*\big(\tfrac{124065}{9271} \cdot R \cdot x \cdot y^2\big) \neq 0.$$

\section{Proof of Theorem \ref{thm:app}}

The main theorems of \cite{GRW1, GRW2} imply that $H^*(B\Diff(W_{g}^{2n}, D^{2n});\bQ)$ is generated by generalised MMM-classes in degrees $* \leq \frac{g-4}{2}$. As these classes may be defined on $B{\Homeo}(W_{g}^{2n}, D^{2n})$ by the results of Section \ref{topbundles} of the present paper, we immediately find that
$$H^*(B{\Homeo}(W_{g}^{2n}, D^{2n});\bQ) \lra H^*(B\Diff(W_{g}^{2n}, D^{2n});\bQ)$$
is surjective in degrees $* \leq \frac{g-4}{2}$. The same argument, using Section \ref{blockbundles-mmm} instead, proves the surjectivity for the comparison map between diffeomorphisms and block diffeomorphisms. In \cite[Theorem 5.1]{ERW}, we proved (or rather derived from results by Waldhausen, Igusa, Farrell--Hsiang and others) that $B\Diff(W_{g}^{2n}, D^{2n}) \to B\blockdiff(W_{g}^{2n}, D^{2n})$ induces an isomorphism in rational cohomology, in degrees $* \leq \min\big(\tfrac{2n-7}{2}, \tfrac{2n-4}{3}\big)$. Thus it is left to prove the next proposition.

\begin{proposition}
The map
$$H^*(B\Homeo(W_g^{2n}, D^{2n});\bQ) \lra H^*(B\Diff(W_g^{2n}, D^{2n});\bQ)$$
is a split injection in degrees $* \leq \min\big(\tfrac{2n-7}{2}, \tfrac{2n-4}{3}\big)$.
\end{proposition}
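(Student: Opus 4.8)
The map in question, which I will call $\iota^{*}$, has been seen just above to be surjective in degrees $* \le \tfrac{g-4}{2}$; since an injection of graded $\bQ$-vector spaces is automatically split, it suffices to prove that $\iota^{*}$ is \emph{injective} in degrees $* \le \min\big(\tfrac{2n-7}{2},\tfrac{2n-4}{3}\big)$ --- in combination with the surjectivity just recalled this will moreover make $\iota^{*}$ an isomorphism in that range and identify the common cohomology with the polynomial algebra of \cite[Theorem 1.1]{GRW2}, as asserted in Theorem \ref{thm:app}. The plan is not to compare $B\Diff$ and $B\Homeo$ directly, but to route the comparison through their block analogues, where smoothing becomes a \emph{stable} problem.

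First I would introduce the semi-simplicial group of block homeomorphisms $\widetilde{\Homeo}(W_g^{2n},D^{2n})_{\bullet}$, defined exactly as in Definition \ref{defn:blockdiffs} but using homeomorphisms in place of diffeomorphisms, together with its classifying space $B\widetilde{\Homeo}(W_g^{2n},D^{2n})$, which classifies topological block bundles with fibre $W_g^{2n}$ relative to a trivialised disc. Since a smooth bundle is a topological bundle, and a (smooth or topological) fibre bundle over a simplicial complex is a block bundle, there is a commutative square
$$\xymatrix{
B\Diff(W_g^{2n},D^{2n}) \ar[r]\ar[d] & B\blockdiff(W_g^{2n},D^{2n}) \ar[d]\\
B\Homeo(W_g^{2n},D^{2n}) \ar[r] & B\widetilde{\Homeo}(W_g^{2n},D^{2n}),
}$$
the horizontal maps passing from fibre bundles to block bundles and the verticals forgetting the smooth structure.

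The heart of the argument is that, in degrees $* \le \min\big(\tfrac{2n-7}{2},\tfrac{2n-4}{3}\big)$, three of the four maps in this square are rationally well-behaved:
\begin{enumerate}[(a)]
\item $B\Diff(W_g^{2n},D^{2n}) \to B\blockdiff(W_g^{2n},D^{2n})$ is a rational cohomology isomorphism: this is exactly \cite[Theorem 5.1]{ERW}.
\item $B\blockdiff(W_g^{2n},D^{2n}) \to B\widetilde{\Homeo}(W_g^{2n},D^{2n})$ is a rational cohomology isomorphism: a smooth block bundle is classified by its \emph{stable} vertical tangent bundle (Section \ref{blockbundles-mmm}), so by surgery theory the two block automorphism spaces sit over the same homotopy-automorphism space, with homotopy fibres the smooth and topological block structure spaces; these differ only in their normal invariants, hence rationally only by maps into $\mathrm{TOP}/\mathrm{O}$, which is rationally trivial since $\pi_{*}(\mathrm{TOP}/\mathrm{O})$ is finite in every degree \cite[Ess.\ V]{KS}.
\item $B\Homeo(W_g^{2n},D^{2n}) \to B\widetilde{\Homeo}(W_g^{2n},D^{2n})$ is rationally surjective on cohomology: this is the topological counterpart of \cite[Theorem 5.1]{ERW}. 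Its homotopy fibre is, in the relevant range, assembled from the topological concordance (pseudo-isotopy) spaces of $W_g^{2n}$; by the Burghelea--Lashof comparison these agree rationally with the smooth concordance spaces in the concordance-stable range, and the latter are rationally acyclic in degrees $* \le \min\big(\tfrac{2n-7}{2},\tfrac{2n-4}{3}\big)$ by the theorems of Waldhausen, Igusa and Farrell--Hsiang used in \cite{ERW}.
\end{enumerate}

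Granting (a)--(c) the conclusion is formal: in the range under consideration the composite $H^{*}(B\widetilde{\Homeo}) \to H^{*}(B\blockdiff) \to H^{*}(B\Diff)$ is an isomorphism by (b) and (a), while by commutativity of the square it factors as $H^{*}(B\widetilde{\Homeo}) \to H^{*}(B\Homeo) \xrightarrow{\iota^{*}} H^{*}(B\Diff)$ with the first factor surjective by (c); an isomorphism whose first factor is surjective forces the second factor $\iota^{*}$ to be injective. I expect step (c) to be the main obstacle: one must make the topological counterpart of \cite[Theorem 5.1]{ERW} precise --- identify $\hofib\big(B\Homeo(W_g^{2n},D^{2n}) \to B\widetilde{\Homeo}(W_g^{2n},D^{2n})\big)$ with the appropriate topological concordance-theoretic object and carry the Burghelea--Lashof comparison through --- and, crucially, verify that the range of degrees that results is no smaller than $\min\big(\tfrac{2n-7}{2},\tfrac{2n-4}{3}\big)$. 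Step (b) is also delicate, in that it is essential to argue with block bundles, where the smoothing problem is stable and controlled by $\mathrm{TOP}/\mathrm{O}$, rather than with honest fibre bundles, whose smoothing is controlled by the unstable space $\mathrm{TOP}(2n)/\mathrm{O}(2n)$, which is not known to be rationally trivial in a useful range.
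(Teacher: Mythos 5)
Your route through block homeomorphisms is genuinely different from the paper's, and while steps (a) and (b) and the formal deduction at the end are fine, step (c) as justified contains a real error. The assertion that the topological concordance spaces ``agree rationally with the smooth concordance spaces in the concordance-stable range'' is false: already for a disc, $\mathcal{C}^{\mathrm{TOP}}(D^{m},\partial)$ is contractible by the Alexander trick, whereas in the concordance-stable range $\pi_*\mathcal{C}^{\Diff}(D^{m},\partial)\otimes\bQ \cong K_{*+2}(\bZ)\otimes\bQ$ is non-zero for $*\equiv 3 \bmod 4$, for either parity of $m$ (the parity only enters after passing to the involution). For the same reason the smooth concordance space of $W_g^{2n}$ is \emph{not} rationally acyclic in the stated range once $2n\geq 13$: what \cite[Theorem 5.1]{ERW} actually uses is the rational vanishing of the $C_2$-homotopy orbits of the smooth Whitehead spectrum, where the involution (whose sign depends on the parity of the dimension) kills $K_*(\bZ)\otimes\bQ$; the concordance space itself retains this rational homotopy. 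The conclusion of (c) is probably still true, but proving it amounts to the rational triviality of the topological Whitehead spectrum of a simply connected space, i.e.\ the rational surjectivity of the $A$-theory assembly map $\Sigma^\infty X_+\wedge A(*)\to A(X)$ --- a theorem of B\"{o}kstedt and Hsiang--Staffeldt that is a far deeper input than anything you cite, and certainly not a consequence of the Burghelea--Lashof comparison as you invoke it. Step (b) also needs more than ``classified by its stable vertical tangent bundle'': one must set up the smooth and topological block structure spaces, compare the two surgery fibrations over $B\hAut$, and check that the difference is controlled by $\map(W_g^{2n},\mathrm{TOP}/\mathrm{O})$ together with $\pi_0$-issues; this is standard but not free.

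The paper avoids all of this by arguing directly on $F=\hofib\big(B\Diff(W_g^{2n},D^{2n})\to B\Homeo(W_g^{2n},D^{2n})\big)$: Morlet--Kirby--Siebenmann smoothing theory identifies $F$, onto the components it hits, with a space of lifts of the topological tangent bundle along $BO(2n)\to BTOP(2n)$; a handle decomposition reduces this to copies of $\Omega^{n}\big(\mathrm{TOP}(2n)/\mathrm{O}(2n)\big)$, which are controlled rationally by the $(2n+1)$-connectivity of $\mathrm{TOP}(2n)/\mathrm{O}(2n)\to \mathrm{TOP}/\mathrm{O}$ and the finiteness of $\pi_*(\mathrm{TOP}/\mathrm{O})$, plus one copy of $\Omega^{2n}\big(\mathrm{TOP}(2n)/\mathrm{O}(2n)\big)$ for the top handle, which is identified with $B\Diff(D^{2n},\partial D^{2n})$ via the Alexander trick and handled by Farrell--Hsiang. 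The splitting is then exhibited explicitly by the coefficient maps $\bQ\to\bQ[\pi_0(F)]\to\bQ$ in the Serre spectral sequence. Note that this is where the range $\min\big(\tfrac{2n-7}{2},\tfrac{2n-4}{3}\big)$ actually enters, so any alternative argument must ultimately pass through the same Farrell--Hsiang/Igusa input.
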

\begin{proof}
Note that the statement only has content for $2n \geq 10$, so we may as well suppose this is the case. Let us denote by $F$ the homotopy fibre of
$$B\Diff(W_g^{2n}, D^{2n}) \lra B\Homeo(W_g^{2n}, D^{2n}).$$
We make two claims: that $\pi_0(F)$ is a finite set, and that each path-component of $F$ has trivial rational homology in degrees $* \leq \min\big(\tfrac{2n-7}{2}, \tfrac{2n-4}{3}\big)$. Granted these claims, the Leray--Serre spectral sequence for $f$ is supported along the line $q=0$ in total degrees $p+q \leq \min\big(\tfrac{2n-7}{2}, \tfrac{2n-4}{3}\big)$, so there is an isomorphism
$$H^*(B\Homeo(W_g^{2n}, D^{2n});\bQ[\pi_0(F)]) \cong H^*(B\Diff(W_g^{2n}, D^{2n});\bQ)$$
in degrees $* \leq \min\big(\tfrac{2n-7}{2}, \tfrac{2n-4}{3}\big)$. The proposition now follows from the maps of coefficient systems
$$\bQ \overset{1 \mapsto \sum x}\lra \bQ[\pi_0(F)] \overset{\epsilon}\lra \bQ.$$

It remains to prove the two claims. As we have supposed that $2n \geq 10$, smoothing theory (cf.\ \cite[Ess.\ V \S 3]{KS}) applies, and provides a map from $F = \Homeo(W_g^{2n}, D^{2n}) / \Diff(W_g^{2n}, D^{2n})$ to the space $\Gamma(W_g^{2n}, D^{2n})$ of lifts in the diagram
\begin{equation*}
\xymatrix{
D^{2n} \ar[r]\ar[d] & BO(2n) \ar[d]\\
W_g^{2n} \ar[r]\ar@{-->}[ru] & BTOP(2n)
}
\end{equation*}
and shows that $F \to \Gamma(W_g^{2n}, D^{2n})$ is a homotopy equivalence onto those path components which it hits. Thus it is enough to show that $\pi_0(\Gamma(W_g^{2n}, D^{2n}))$ is finite and that each path-component of $\Gamma(W_g^{2n}, D^{2n})$ has trivial rational homology in degrees $* \leq \min\big(\tfrac{2n-7}{2}, \tfrac{2n-4}{3}\big)$.

Choose a handle decomposition of $W_g^{2n}$ with a single 0-handle the disc $D^{2n}$, $2g$ $n$-handles, and a single $2n$-handle. Let us write $M$ for the union of the handles of index less than $2n$, and $\Gamma(M, D^{2n})$ for the analogous space of lifts for $M$. The tangent bundle of $W_g^{2n}$ is trivial when restricted to $M$, so choosing a trivialisation gives an equivalence
$$\Gamma(M,D^{2n}) \simeq \left[\Omega^n \left(\frac{TOP(2n)}{O(2n)}\right)\right]^{2g}.$$
Recall that $TOP(2n) / O(2n) \to TOP / O$ is $(2n+1)$-connected \cite[Ess.\ V, Thm.\ 5.2]{KS}, and ${TOP / O}$ has finite homotopy groups (they are the groups of exotic spheres), so the set of path components of $\Gamma(M,D^{2n})$ is a finite set, and each path component has trivial rational homology in degrees $* \leq 2n-n = n$.

Restricting lifts gives a fibration $\Gamma(W_g^{2n},D^{2n}) \to \Gamma(M,D^{2n})$, and the fibre over a point is either empty, or is homotopy equivalent to $\Omega^{2n}\left({TOP(2n) / O(2n)}\right)$. It will be enough to show that this space has finitely-many path components and trivial rational homology in positive degrees (in a range). As $\Homeo(D^{2n}, \partial D^{2n})$ is contractible (by the Alexander trick), we have
$$\Homeo(D^{2n}, \partial D^{2n}) / \Diff(D^{2n}, \partial D^{2n}) \simeq B\Diff(D^{2n}, \partial D^{2n})$$
and smoothing theory again provides a map
$$\Homeo(D^{2n}, \partial D^{2n}) / \Diff(D^{2n}, \partial D^{2n}) \lra \Omega^{2n}\left(\frac{TOP(2n)}{O(2n)}\right)$$
which is a homotopy equivalence onto the path component which it hits. The set $\pi_0(\Omega^{2n}({TOP(2n) / O(2n)})) = \pi_{2n}(TOP(2n) / O(2n))$ is finite as above, and a theorem of Farrell and Hsiang \cite{FHs} shows that the rational homotopy groups $\pi_k(B\Diff(D^{2n}, \partial D^{2n}))\otimes \bQ$ are zero for $k \leq \min\big(\tfrac{2n-7}{2}, \tfrac{2n-4}{3}\big)$ (see \cite[\S 6.1]{WW2} for a careful treatment of the range). Thus $\Omega^{2n}_0({TOP(2n) / O(2n)})$ has trivial rational homology in this range.
\end{proof}

\bibliographystyle{alpha}

\end{document}